\documentclass[a4paper, reqno, 10pt]{amsart}
\usepackage{amssymb}
\usepackage{extarrows}
\usepackage{bm}
\usepackage[centering]{geometry}
\usepackage{enumitem}
\usepackage{longtable}
\usepackage{multirow}
\usepackage{array}
\usepackage{etex}
\usepackage{pictex}
\usepackage{graphics}

\usepackage[all]{xy}

\numberwithin{equation}{section}
\linespread{1.2}
\begin{document}

\newtheorem{lemdef}{Lemma-Definition}[section]
\newtheorem{theorem}{Theorem}[section]
\newtheorem{corollary}[theorem]{Corollary}
\newtheorem{lemma}[theorem]{Lemma}
\newtheorem{proposition}[theorem]{Proposition}
\newtheorem{definition}[theorem]{Definition}
\newtheorem{remark}[theorem]{Remark}
\newtheorem{examples}[theorem]{Examples}
\newtheorem{fact}[theorem]{Fact}
\def\Id{{\rm Id}}
\def\rad{{\rm rad}}

\title[Bi-Frobenius algebra structure on quantum complete intersections]{Bi-Frobenius algebra structure \\ on quantum complete intersections}
\author[Hai Jin, Pu Zhang]{Hai Jin, \ \ Pu Zhang$^*$  \\ \\  School of Mathematical Sciences \\
Shanghai Jiao Tong University,  \ Shanghai 200240, \ China }
\thanks{$^*$ Corresponding author}
\thanks{pzhang$\symbol{64}$sjtu.edu.cn \ \ \ \  jinhaifyh$\symbol{64}$sjtu.edu.cn}
\thanks{\it 2020 Mathematics Subject Classification. Primary 16W99, 16T15; Secondary 16T30, 81R50}
\begin{abstract}
  This paper is to look for bi-Frobenius algebra structures on quantum complete intersections over field $k$.
We find a class of comultiplications, such that if $\sqrt{-1}\in k$,
then a quantum complete intersection becomes  a bi-Frobenius algebra with comultiplication of this form if and only if all the parameters $q_{ij} = \pm 1$. Also, it is proved that if $\sqrt{-1}\in k$ then
a quantum exterior algebra in two variables admits a bi-Frobenius algebra structure if and only if the parameter $q = \pm 1$. While if $\sqrt{-1}\notin k$, then the exterior algebra with two variables admits no bi-Frobenius algebra structures.
We prove that the quantum complete intersections
admit a bialgebra structure if and only if it admits a Hopf algebra structure, if and only if it is commutative, the characteristic of $k$ is
a prime $p$, and every  $a_i$ a power of $p$. This also provides a large class of examples of bi-Frobenius algebras which are not bialgebras (and hence not Hopf algebras). In commutative case, other two comultiplications on complete intersection rings are given, such that they admit non-isomorphic bi-Frobenius algebra structures.
\vskip5pt
\noindent Keywords: Bi-Frobenius algebras, coalgebras, bialgebras, Hopf algebras, quantum complete intersections,
quantum exterior algebras
\end{abstract}

\maketitle

\section{\bf Introduction}

Frobenius algebras are of particular interest and have wide applications (see e.g. \cite{Y1996,K1999}),
their module categories are Frobenius categories, and the corresponding stable categories
are triangulated (\cite{Hap1988}).
A bi-Frobenius algebra $A$, introduced by Y. Doi and M. Takeuchi \cite{DT2000}, is both a Frobenius algebra and a Frobenius coalgebra,
together with a linear map $S: A\longrightarrow A$, called the antipode, which is an algebra anti-homomorphism
and also a coalgebra anti-homomorphism.

Finite-dimensional Hopf algebras are bi-Frobenius algebras, but the converse is not true.
A bi-Frobenius algebra is a Hopf algebra if and only if it is a bialgebra (\cite{Haim2007}). The theory of bi-Frobenius algebras is developed in \cite{Doi2002, Doi2004, Haim2007}.
Some results in Hopf algebras have been generalized to bi-Frobenius algebras, e.g.
Radford's $S^4$-formula (\cite{Radford1976}), see \cite{DT2000}.

One of problems in this theory is to find
bi-Frobenius algebras which are not Hopf algebras. In \cite{DT2000} a coalgebra structure  on \ $k[X]/\langle X^n\rangle$ is constructed such that it is a bi-Frobenius algebra but not a Hopf algebra.
Group-like algebras are introduced in \cite{Doi2004} such that they become cocommutative bi-Frobenius algebras.
This class of bi-Frobenius algebras are further studied in \cite{Haim2007}, \cite{Doi2010}, and \cite{WL2014}.
In particular, using group-like algebras, a class of bi-Frobenius algebras which are not Hopf algebras
are found in \cite{Haim2007}, where the antipode \ $S$ is still the inverse of \ ${\rm Id}$ in the convolution algebra; also
the complexified stable Green algebra is a group-like algebra (\cite{WL2014}).
Using quiver approach,
some bi-Frobenius algebras which are not Hopf algebras are constructed in \cite{WZ2004} and \cite{WC2007}.

Quantum complete intersections originate from Yu. I. Manin's quantum planes  (\cite{Manin1987}),
reach their present form by L. L. Avramov, V. N. Gasharov and I. V. Peeva (\cite{AGP1997}),
and are closely related to braided Hopf algebras via quantum linear spaces by N. Andruskiewitsch and H.-J. Schneider (\cite{AH1998,AH2002}).
They reveal many exotic homological and representation properties of algebras
(see e.g.,\cite{LS1994,Ringel1996,BGMS2005,BE2008,BO2008,BO20082,BE2011,Mar,YZ2021,RZ2020}).

Quantum complete intersections are Frobenius algebras. Thus, it is natural
to look for bi-Frobenius algebra structures on them. In this paper, we construct a class of comultiplications on an arbitrary quantum complete intersection \ $A({\bf q}, a_1, \cdots, a_n)$ over field  $k$; and prove that
if $\sqrt{-1}\in k$, then it becomes  a bi-Frobenius algebra, with any comultiplication of this form, if and only if all the parameters $q_{ij} = \pm 1$. See Theorem \ref{mainthm}.
Also, it is proved that if $\sqrt{-1}\in k$, then
the quantum exterior algebra in two variables admits a bi-Frobenius algebra structure (no restrictions on the form of comultiplication) if and only if \ $q = \pm 1$ \ (see Theorem \ref{bifroontwovar}).
While if $\sqrt{-1}\notin k$, then the exterior algebra with two variables admits no bi-Frobenius algebra structures \ (see Proposition \ref{twovar}).

We stress that the comultiplication we used in Theorem \ref{mainthm} is different from
the one in {\rm Example 1.9} of {\rm \cite{AH2002}}. See Remark \ref{difference}.

Using Kummer's theorem in number theory,  we prove that the quantum complete intersections over filed $k$
admit a bialgebra structure if and only if it admits a Hopf algebra structure, if and only if it is commutative, and the characteristic of $k$ is
a prime $p$, and every  $a_i$ a power of $p$. See Theorem \ref{qcibialgebra}. This also gives a large class of examples of bi-Frobenius algebras which are not bialgebras (and hence not Hopf algebras). See Corollary \ref{bifrononbi}.

In commutative case, we find other two comultiplications on complete intersection rings, such that they admit non-isomorphic bi-Frobenius algebra structures. See Theorems \ref{pathcoalg} and \ref{2truncatedpolyalg}.

\section{\bf Preliminary}

\subsection{\bf Frobenius algebras and Frobenius coalgebras.}

Let $A$ be a finite-dimensional algebra over field \ $k$. Then the $k$-dual $A^*=\mathrm{Hom}_k(A, \ k)$ has a canonical $A$-bimodule structure. Denote by
 $a\rightharpoonup f$ the left action of \ $a\in A$ on \ $f\in A^*$, i.e., $
(a\rightharpoonup f)(x)=f(xa), \ \forall \ x\in A$. Similarly, denote by \
 $f\leftharpoonup a$ the right action of \ $a$ on \ $f$, i.e., \ $
(f\leftharpoonup a)(x)=f(ax)$.

Recall that \ $A$ is {\it a Frobenius algebra} provided that there is an isomorphism $A\cong A^*$ of left $A$-modules, or equivalently, there is an isomorphism $A\cong A^*$ of right $A$-modules; and that
$A$ is {\it a symmetric algebra}, provided that there is an isomorphism $A\cong A^*$ of $A$-bimodules.

By an elementary fact in linear algebra, a Frobenius algebra can also be characterized as a pair  \ $(A, \ \phi)$, where \ $\phi\in A^*$,  such that \ $A^* = A \rightharpoonup \phi$ (or equivalently, \ $A^* = \phi \leftharpoonup A$).
Such a \ $\phi$ is called {\it a Frobenius homomorphism} of \ $A$.

Let \ $(C, \ \Delta, \ \varepsilon)$ be a finite-dimensional coalgebra over  $k$. All the tensor products are over $k$. We use the Heyneman-Sweedler notation \ $\Delta(c)=\sum c_1\otimes c_2$.
Then \ $C^*:=\mathrm{Hom}_k(C,\ k)$ is an algebra and \ $C$ has a canonical $C^*$-bimodule structure.
Denote by \ $f\rightharpoonup c$ the left action of \ $f\in C^*$ on \ $c\in C$,  i.e., \ $f\rightharpoonup c=\sum c_1f(c_2)$.  Similarly, \ $c\leftharpoonup f=\sum f(c_1)c_2$.

A finite-dimensional coalgebra \ $C$  is a  Frobenius coalgebra if there is an element \ $t\in C$ such that \ $C=t\leftharpoonup C^*$,
or equivalently, \ $C=C^*\rightharpoonup t$. Usually we denote  a  Frobenius coalgebra by a pair \ $(C, \ t)$.

It is clear that \ $(A, \ \phi)$ is a Frobenius algebra if and only if \ $(A^*, \ \phi)$ is a Frobenius coalgebra.

For a coalgebra structure with comultiplication \ $\Delta$ on an algebra \ $A$, an element \ $x\in A$ is {\it primitive},  if \ $\Delta(x)=1\otimes x+ x\otimes 1$, where \ $1$ is the identity.
All the primitive elements form a linear space,  denoted by \ $P(A)$.

\subsection{\bf Bi-Frobenius algebras.}
\begin{definition} \ {\rm (\cite{DT2000})} \ Let $A$ be a finite-dimensional algebra and coalgebra over a field $k$ with $t\in A$ and $\phi\in A^*$. Let $S: A\longrightarrow A$ be the $k$-linear map
defined by $S(a)= \sum\phi(t_{1}a)t_{2}.$  The quadruple   $(A, \phi, t, S)$, or simply, $A$ is  a bi-Frobenius algebra, if the following conditions are satisfied$:$

{\rm (i)} \ The counit $\varepsilon: A\longrightarrow k$ is an algebra homomorphism, and the identity $1_A$ is a group-like element$;$

{\rm (ii)} \ $(A, \ \phi)$ is a Frobenius algebra, and \ $(A, \ t)$ is a Frobenius coalgebra$;$

{\rm (iii)} \  $S$ is an algebra anti-homomorphism, and $S$ is a coalgebra anti-homomorphism  $($i.e., $\varepsilon \circ S =\varepsilon$, and \  $\Delta(S(a)) = \sum S(a_{2})\otimes S(a_{1}), \ \forall \ a\in A)$.

In this case, $S$ is called the antipode of bi-Frobenius algebra  $(A, \phi, t, S)$.
\end{definition}

Let \ $(A, \ \phi, \ t,  \ S)$ and \ $(A', \ \phi', \ t', \ S')$ be bi-Frobenius algebras. A linear map \ $F: A\longrightarrow A'$ is  {\it a homomorphism of bi-Frobenius algebra} \ (\cite{Doi2002}), if \ $F$ is an algebra homomorphism and a coalgebra homomorphism, such that \ $F\circ S  = S' \circ F$.

An algebra $A$ is {\it augmented}, if there is an algebra homomorphism \ $\varepsilon: A\longrightarrow k$.
For an augmented algebra \ $(A, \varepsilon)$, an element \ $t\in A$ is  {\it a right} (respectively, {\it left}) {\it integral} of \ $A$ if \ $t x=t \varepsilon(x)$ (respectively, \ $x t= \varepsilon(x)t$), for all \ $x\in A$.
If the left integral space coincides with the right integral space, then \  $A$ is called {\it unimodular}.

Dually, a coalgebra \ $C$ is {\it coaugmented}, if it has a group-like element \ $h\in C$. For a coaugmented coalgebra \ $(C, \ h)$,
an element \ $\phi\in C^*$ is {\it a right}  (respectively, {\it left})  {\it cointegral} of
\ $C$ if \ $x \leftharpoonup\phi =\phi(x)h$  (respectively, \ $\phi \rightharpoonup x= h\phi(x)$), for all \ $x\in C$.
If the left cointegral space  coincides with the right cointegral space, then $C$ is called {\it counimodular}.

\begin{lemma} \label{dt} $(${\rm \cite{DT2000, Haim2007}}$)$  \ $(1)$ \ If \ $A$ is an augmented Frobenius algebra, then the space of right integrals is one-dimensional.

$(2)$ \ If \ $C$ is a Frobenius coalgebra with a group-like element, then the space of right cointegrals is one-dimensional.

$(3)$ \  If \ $(A, \ \phi, \ t, \ S)$ is a bi-Frobenius algebra, then \ $t$ is a right integral and \ $\phi$ is a right cointegral, and thus
the space of right integral is $kt$, the space of right cointegral is $k\phi$.

$(4)$ \  If \ $(A,\ \phi,\ t,  S)$ is a bi-Frobenius algebra, then $(A,  \ c\phi, \ \frac{1}{c}t,  S)$ is also a bi-Frobenius algebra  for any nonzero element \ $c\in k$.
\end{lemma}

We need the following observation in \cite{Doi2002}.

\begin{lemma}\label{lemDoi2002} {\rm (\cite{Doi2002})} \  Let A be a finite-dimensional algebra and coalgebra satisfying the condition \ {\rm (i)} in {\rm Definition 2.1}.
If there are \ $\phi\in A^*$ and $t\in A$ such that the $k$-linear map
  \[
  S: A\longrightarrow A, \ \ a\mapsto \sum \phi(t_1a)t_2
  \]
  is an algebra anti-isomorphism and a coalgebra anti-isomorphism,  then \  $(A, \ \phi, \ t, \ S)$ is a bi-Frobenius algebra.
\end{lemma}

\subsection{\bf Quantum complete intersections.} \ Let $k$ be a field, \ $a_1, \cdots, a_n$ integers with all \ $a_i\ge  2$ and \ $n\ge 2$, and \ ${\bf q} = (q_{ij})$ an \ $n\times n$ matrix
over \ $k$ with \ $q_{ii}=-1$ and $q_{ij}q_{ji}=1$ for  \ $1\le i, j\le n$. {\it A quantum complete intersection} $A({\bf q}, a_1, \cdots, a_n)$ is the following \ $k$-algebra
\[k\langle x_1, \cdots, x_n\rangle/\langle x_i^{a_i}, \ x_ix_j+q_{ij}x_jx_i, \ 1\le  i, \ j\le  n\rangle.
\]
When all $a_i = 2$, it is called {\it a quantum exterior algebra}.
It is well-known that a quantum complete intersection is a local Frobenius algebra and it is symmetric if and only if $\prod\limits_{1\le i\le n} (-q_{i j})^{a_i-1} = 1$ for all $j$ (see P. A. Bergh \cite [Lemma 3.1]{Bergh2009}).

Denote by $\mathbb{N}_0$  the set of non-negative integers.  Put
$$V=\{ \mathbf{v} = (v_1, \cdots, v_n)\in \mathbb{N}_0^n \ | \ v_i\le a_i-1,  \ 1\le i\le n\}.$$
For $\mathbf{v}\in \mathbb{N}_0^n$, write $x_{\mathbf{v}} = x_1^{v_1}\cdots x_n^{v_n}$. Then $\mathcal B = \{x_{\mathbf{v}} \ | \ \mathbf{v}\in V\}$ is a basis of $A({\bf q}, a_1, \cdots, a_n)$. The dual basis of \ $A^*$  is \ $\{x_{\bf v}^* \ |  \ {\bf v}\in V\}$, where
\ $x_{\bf v}^*(x_{\bf u}) = \delta_{{\bf v}, {\bf u}}, \ \forall \ {\bf v}, {\bf u}\in V$.

The degree of $x_{\mathbf{v}}\in \mathcal B$ is defined as \ $|x_{\mathbf{v}}| = \sum\limits_{i=1}^n v_i$,
and the degree of $x_{\mathbf{u}} \otimes x_{\mathbf{v}}\in \mathcal B\otimes \mathcal B$ is defined as \ $|x_{\mathbf{u}}|+|x_{\mathbf{v}}|$.

Put \ ${\bf a} = (a_1, \cdots, a_n), \ \ {\bf 0} = (0, \cdots, 0), \ \ {\bf 1} = (1, \cdots, 1)$. For $1\le i\le n$, let \ ${\bf e_i}$ \ be the vector in \ $\mathbb{N}_0^n$ with the \ $i$-th component \ $1$ and other components \ $0$.
Endow a partial order on $\mathbb{N}_0^n$  by \ $\mathbf{u}\le  \mathbf{v}$ if and only if \ $u_i\le  v_i$ for all $1\le i\le n$. Then $\mathbf{v}\le \mathbf{a-1}$ for all $\mathbf{v}\in V.$

For $\mathbf{u, v} \in \mathbb{N}_0^n$, put \ ${\bf q}^{\langle \mathbf{u}|\mathbf{v}\rangle}= \prod\limits_{1\le i<j\le n}(-\frac{1}{q_{ij}})^{u_jv_i}$. Then
$$x_\mathbf{u}x_\mathbf{v}={\bf q}^{\langle \mathbf{u}|\mathbf{v}\rangle}x_{\mathbf{u+v}}.$$
See S. Oppermann  \cite[Section 2]{Oppermann2010}.

To look for bi-Frobenius algebra structures on $A = A({\bf q}, \ a_1, \cdots, a_n)$, it is helpful to know all the
Frobenius homomorphisms of  $A$. By the proof of Lemma 3.1 in  \cite {Bergh2009}, it is known that $x_\mathbf{a-1}^*$ is a Frobenius homomorphism. Thus,  if  \ $\phi\in A^*$ is a Frobenius homomorphism, then \ $\phi= x_\mathbf{a-1}^*\leftharpoonup  r$, where \ $r$ is an invertible element of $A$. See M. Lorenz \cite[1.2.5]{L2011}.  Since $A$ is a finite-dimensional local algebra,  $r = c'_{\mathbf{a-1}}1+\sum\limits_{\mathbf{v}\neq\mathbf{0}} c'_{\mathbf{a-1 - v}} x_\mathbf{v}\in A$ with each  \ $c'_{\mathbf{a-1 - v}} \in k$ is invertible if and only if $c'_{\mathbf{a-1}} \ne 0$. It follows that a Frobenius homomorphism $\phi$  is of the form
$$\phi = x_{\mathbf{a-1}}^*\leftharpoonup r = x_\mathbf{a-1}^* \leftharpoonup (c'_{\mathbf{a-1}}1+\sum\limits_{\mathbf{v}\neq\mathbf{0}} c'_{\mathbf{a-1 - v}} x_\mathbf{v}) = \sum\limits_{\mathbf{v}\in V} c_\mathbf{v}
\ x_\mathbf{v}^*$$ with $c_{\mathbf{a-1}} = c'_{\mathbf{a-1}}\neq 0$. This proves {\rm (1)} below.

\begin{lemma} \label{frobeniushomomorphism} \ {\rm (1)} \  All the Frobenius homomorphisms of \ $A = A({\bf q}, a_1, \cdots, a_n)$ are
$$\phi = \sum_{\mathbf{v}\in V} c_\mathbf{v}x_\mathbf{v}^* \ \ \mbox{where} \ c_\mathbf{v} \in k \  \mbox{and} \  c_{\mathbf{a-1}} \neq 0.$$

{\rm (2)} \ Put \ $t = x_\mathbf{a-1}$  and  \ $\phi  = (x_{{\bf a-1}})^*$.
Then \ $(A, \ \phi)$ is a unimodular, augmented Frobenius algebra, with the space of integrals \ $kt.$
\end{lemma}

\begin{proof} {\rm (2)} \  By  {\rm (1)}, \ $\phi$ is a Frobenius homomorphism.
Let \ $\varepsilon: A\longrightarrow k$ be the algebra homomorphism given by \ $\varepsilon(x_\mathbf{v})=\delta_{\mathbf{v,0}}$. Then \ $t \ x_{\bf v} =  x_{\bf a-1} \ x_{\bf v}= \delta_{\bf v, 0} \ t = t \ \varepsilon(x_{\bf v})$ for \ ${\bf v}\in V$.
Thus \ $t$ is a right integral of augmented Frobenius algebra \ $(A, \ \phi)$.
Similarly, $t$ is a left integral.
By Lemma \ref{dt}(1), the space of right (respectively, left) integrals is one-dimensional, thus $A$ is unimodular, and the space of integrals \ $kt.$
\end{proof}

\section{\bf Bi-Frobenius algebra structures on quantum complete intersections}

We will give a class of comultiplications on an arbitrary quantum complete intersection
\[A = A({\bf q}, a_1, \cdots, a_n) = k\langle x_1, \cdots, x_n\rangle/\langle x_i^{a_i}, \ x_ix_j+q_{ij}x_jx_i, \ 1\le  i, \ j\le  n\rangle
\]
and then prove that if \ $\sqrt{-1}\in k$, then \ $A$ becomes a bi-Frobenius algebra,
with comultiplication of the given form, if and only if \ $q_{ij} = \pm 1$ \ for \ $1\le i, j\le n.$

\subsection{\bf Coalgebra structures on quantum complete intersections.}
Put \ $g_{\mathbf{0, a-1}} = 1 = g_{\mathbf{a-1, 0}},$ and choose arbitrary \ $(\prod\limits_{1\le i\le n} a_i)-2$ \ nonzero elements \ $g_{\mathbf{v, a-1-v}}\in k$ for \ ${\bf v}\in V, \ {\bf v}\ne {\bf 0}, \
{\bf v}\ne {\bf a-1}$.
  Define \  $\Delta: A \longrightarrow A\otimes A$ and $\varepsilon: A\longrightarrow k$ to be $k$-linear maps as

$$\left\{
\begin{array}{l}
\varepsilon(x_\mathbf{v})=\delta_{\mathbf{v,0}};\\

\Delta(1)= 1\otimes 1; \\

\Delta(x_\mathbf{v})= 1\otimes x_\mathbf{v} +x_\mathbf{v}\otimes 1 \ \ \text{for all} \ \ {\bf 0}\ne \mathbf{v}\in V \ \text{with} \ \ \mathbf{v}\ne \mathbf{a-1}; \\

\Delta(x_\mathbf{a-1}) = \sum\limits_{{\bf v}\in V} \ g_{{\bf v}, {\bf a-1 -v}} \ x_\mathbf{v}\otimes x_\mathbf{a-1 -v}
\\ \hskip 40pt = 1\otimes x_\mathbf{a-1} +x_\mathbf{a-1}\otimes 1 + \sum\limits_{\substack {{\bf 0}\ne {\bf v}\in V\\ {\bf v}\ne {\bf a-1}}} \ g_{{\bf v}, {\bf a-1 -v}} \ x_\mathbf{v}\otimes x_\mathbf{a-1 -v}.
\end{array}
\right. \eqno{(3.1)}$$

\begin{lemma}\label{lemcoalg}
With the comultiplication $\Delta$ and counit $\varepsilon$ defined as $(3.1)$, $(A,\Delta,\varepsilon)$ is a coalgebra.
\end{lemma}

\begin{proof}
  One directly verifies that $(A, \ \Delta, \ \varepsilon)$ is a coalgebra, i.e., \ $(\Delta \otimes {\rm Id})\Delta = ({\rm Id} \otimes \Delta)\Delta$ and \ $(\varepsilon\otimes {\rm Id})\Delta\cong {\rm Id} \cong ({\rm Id}\otimes\varepsilon)\Delta$. For example,
\begin{align*}
(\Delta \otimes {\rm Id})\Delta (x_{\bf a-1})
= & 1\otimes 1\otimes x_{\bf a-1} + 1\otimes x_{\bf a-1} \otimes 1 + x_{\bf a-1}\otimes 1 \otimes 1 \\
&+\sum_{\substack{{\bf 0}\ne {\bf v}\in V \\ {\bf v}\ne {\bf a-1}}} g_{{\bf v}, {\bf a-1-v}}
(x_\mathbf{v}\otimes x_\mathbf{a-1-v}\otimes 1 + 1\otimes x_\mathbf{v}\otimes x_\mathbf{a-1-v} + x_\mathbf{v}\otimes 1\otimes x_\mathbf{a-1-v})\\
=& ({\rm Id}\otimes \Delta)\Delta (x_{\bf a-1}).
\end{align*}
\end{proof}

Put \ $t = x_\mathbf{a-1}$  and  \ $\phi  = (x_{{\bf a-1}})^*$. Let \ $S: A\longrightarrow A$ be the $k$-linear map given by
\[
S(a)=\sum \phi(t_1a)t_2  =\sum (x_{\bf a -1})^*(t_1a)t_2 \ \ \text{for all} \ \ a\in A.
\]
Using \ $x_{\bf a-1-v} \ x_{\bf v} = {\bf q}^{\langle \mathbf{a-1-v|v}\rangle} \ x_{\bf a-1}$ one has
$$S(x_\mathbf{v})=\sum (x_{\bf a -1})^*(t_1x_\mathbf{v})t_2 = \ g_{\mathbf{a-1-v,v}} \ {\bf q}^{\langle \mathbf{a-1-v|v}\rangle} \ x_\mathbf{v} \ \ \text{for} \ \mathbf{v}\in V.
$$
Explicitly,
$$\left\{
\begin{array}{l}
S(1)=1, \ \ \
S(x_\mathbf{a-1})=x_\mathbf{a-1};
\\ \\
S(x_\mathbf{v})= \ g_{\mathbf{a-1-v,v}} \ {\bf q}^{\langle \mathbf{a-1-v|v}\rangle} \ x_\mathbf{v}, \ \ \text{for} \ {\bf 0}\ne  \mathbf{v}\ne {\bf a-1}.
\end{array}
\right. \eqno (3.2)$$

The main result of this section is as follows.

\begin{theorem} \label{mainthm} \  If
\ $A = A({\bf q}, \ a_1, \ \cdots, \ a_n)$ admits a bi-Frobenius algebra structure,
where the comultiplication \ $\Delta$ and the counit $\varepsilon$
are of the form \ $(3.1)$, then \ $q_{ij}^2=1$ \ for \ $1\le i, j\le n.$

Conversely, assume that \ $\sqrt{-1}\in k$ and \ $q_{ij}^2=1$ \ for \ $1\le i, j\le n$. Then \
$(A, \ (x_{{\bf a-1}})^*, \ x_{{\bf a-1}}, \ S)$ is a bi-Frobenius algebra,
where the comultiplication \ $\Delta$ and the counit $\varepsilon$ are of the form \ $(3.1)$ for some
special nonzero elements \ $g_{\mathbf{v, a-1-v}}\in k$ with \ ${\bf v}\in V$ and \
$g_{\mathbf{0, a-1}} = 1 = g_{\mathbf{a-1, 0}},$ and \ $S$ is given by \ $(3.2)$.
\end{theorem}

\begin{remark} \ {\rm (1)} \ From the proof of {\rm Theorem \ref{mainthm}}, we will see that the condition \  $\sqrt{-1}\in k$ can be removed in some  special cases.
For example, in the commutative case, i.e., \ $q_{ij} = -1$ \ for \ $1\le i, j\le n$, the condition $\sqrt{-1}\in k$ is not necessary.

On the other hand, if \ $\sqrt{-1}\notin k$, then indeed there is a quantum complete intersection $A$, such that
\ $A$ admits no bi-Frobenius algebra structures, for any $\Delta$ and $\varepsilon$, even for $\Delta$ and $\varepsilon$ given in \ $(3.3)$. See {\rm Proposition \ref{twovar}} below.

{\rm (2)} \ All the  bi-Frobenius algebras involved in {\rm Theorem \ref{mainthm}} has the property of \ $S^4 = {\rm Id}$.
\end{remark}

\subsection{\bf Frobenius coalgebra structures on quantum complete intersections.}
To prove Theorem \ref{mainthm} we need some preparations.

\begin{lemma} \label{frobeniuscoalg} Put \ $t = x_\mathbf{a-1}$  and  \ $\phi  = (x_{{\bf a-1}})^*$.
Then \ $(A, \ t)$ is a counimodular Frobenius coalgebra with the space of cointegrals \ $k\phi$, where the comultiplication \ $\Delta$ and the counit \ $\varepsilon$ are given by $(3.1)$, and \ $g_{\mathbf{v, a-1-v}}\in k$ are arbitrary nonzero elements \ with \ ${\bf v}\in V$ and \
$g_{\mathbf{0, a-1}} = 1 = g_{\mathbf{a-1, 0}}.$

Moreover, \ dim $P(A) = (\prod\limits_{1\le i\le n}a_i) -2.$

\end{lemma}
\begin{proof} \ By Lemma \ref{lemcoalg}, $(A,\Delta,\varepsilon)$ is a coalgebra. For $f\in A^*$, since \ $t\leftharpoonup f = \sum f(t_1)t_2$, one has
\begin{align*}
&t\leftharpoonup 1^* =  x_{\bf a-1};\\
&t\leftharpoonup x_{\bf a-1}^* = 1; \\
&t\leftharpoonup x_{\bf v}^* = g_{\bf v, \ a-1-v} \ x_{\bf a-1-v} \ \ \text{with} \ {\bf 0}\ne \mathbf{v}\ne \mathbf{a-1}.
\end{align*}
Since \ $g_{\bf v, \ a-1-v} \ne 0$,  \ $A = t \leftharpoonup A^*$, i.e., \ $(A, \ t)$ is a Frobenius coalgebra.

By $1\leftharpoonup\phi = 0 = \phi(1)1,$  $x_{\bf v}\leftharpoonup\phi = 0= \phi(x_{\bf v})1$ for ${\bf 0}\ne {\bf v}\in V$ with ${\bf v}\ne {\bf a-1}$,
and $x_{\bf a-1}\leftharpoonup\phi = 1 = \phi(x_{\bf a-1})1$, we see that \ $\phi$ is a right cointegral. Similarly, $\phi$ is a left cointegral.
By Lemma \ref{dt}(2), the space of right (respectively, left) cointegrals is one-dimensional, thus $A$ is counimodular, and the space of cointegrals \ $k\phi.$

Moreover, by (3.1), all \ $x_{\bf v}$ with \ ${\bf v}\in V$ are primitive elements of \ $A$,
except \ ${\bf v} = {\bf 0}$ or \ ${\bf v} = {\bf a-1}$.
Then, it is easy to see \ dim $P(A) = (\prod\limits_{1\le i\le n}a_i) -2.$
\end{proof}

\begin{remark}\label{difference} The comultiplication \ $\Delta$ in {\rm (3.1)} is different from
the one given in {\rm Example 1.9} by Andruskiewitsch and Schneider {\rm \cite{AH2002}:}  by {\rm Lemma \ref{frobeniuscoalg}}, the dimension of the space of primitive elements is $(\prod\limits_{1\le i\le n}a_i) -2;$
while the dimension of the space of primitive elements of the algebra in
{\rm Example 1.9} of {\rm \cite{AH2002}} is just \ $n$.
\end{remark}

\subsection{\bf When $S$ is an algebra anti-homomorphism?}
With \ $\Delta$ and \ $\varepsilon$ given in $(3.1)$ and $S$ given in $(3.2)$, we look for
a sufficient and necessary condition such that \ $S$ is an algebra anti-homomorphism.

\begin{lemma} \label{antipode1} \ Let \ $\Delta$ and \ $\varepsilon$ be given by $(3.1)$, and \ $S: A\longrightarrow A$  the $k$-linear map given by $(3.2)$.
Then $S$ is an algebra anti-homomorphism of $A$ if and only if the following conditions are satisfied$:$
$$q_{i, j}^2=1, \ \ \forall \ 1\leq i, j\le  n \eqno (3.3)$$
$$g_{\mathbf{a-1-(u+v), u+v}}=g_{\mathbf{a-1-u, u}} \ g_{\mathbf{a-1-v, v}}, \ \ \forall \ {\bf u},  \ {\bf v}\in V, \ \ {\bf u} + {\bf v}\le {\bf a-1}.
\eqno (3.4)$$
\end{lemma}

\begin{proof} \ Since the actions of $S$ on elements in the basis $\mathcal B = \{x_{\bf v} \ | \ {\bf v}\in V\}$ are just scalar multiplications,
to show $S$ is an anti-homomorphism of algebra $A$, it is enough to show \ $S(x_{\bf u}x_{\bf v}) = S(x_{\bf v})S(x_{\bf u})$ for \ ${\bf u}, {\bf v}\in V, \ {\bf u} + {\bf v} \le {\bf a-1}$.
By (3.2) one has

\begin{align*}
S(x_\mathbf{u}x_\mathbf{v}) & = S({\bf q}^{\langle \mathbf{u|v}\rangle}x_{\bf u+v})= {\bf q}^{\langle \mathbf{u|v}\rangle}S(x_{\bf u+v})\\
&={\bf q}^{\langle \mathbf{u|v}\rangle}  {\bf q}^{\langle \mathbf{a-1-(u+v)|u+v}\rangle}g_{\mathbf{a-1-(u+v),u+v}} \ x_\mathbf{u+v}\\
S(x_\mathbf{u})&=  {\bf q}^{\langle \mathbf{a-1-u|u}\rangle}g_{\mathbf{a-1-u,u}} \ x_\mathbf{u}\\
S(x_\mathbf{v})&=  {\bf q}^{\langle \mathbf{a-1-v|v}\rangle}g_{\mathbf{a-1-v,v}} \ x_\mathbf{v}\\
S(x_\mathbf{v})S(x_\mathbf{u})&={\bf q}^{\langle \mathbf{a-1-u|u}\rangle}{\bf q}^{\langle \mathbf{a-1-v|v}\rangle} \ g_{\mathbf{a-1-u,u}} \ g_{\mathbf{a-1-v,v}} \ x_\mathbf{v}x_\mathbf{u}\\
  &={\bf q}^{\langle \mathbf{v|u}\rangle}{\bf q}^{\langle \mathbf{a-1-u|u}\rangle}{\bf q}^{\langle \mathbf{a-1-v|v}\rangle} \ g_{\mathbf{a-1-u,u}} \ g_{\mathbf{a-1-v,v}} \ x_\mathbf{u+v}.
\end{align*}

Since \begin{align*}
  {\bf q}^{\langle \mathbf{u|v}\rangle} {\bf q}^{\langle\mathbf{a-1-(u+v)|u+v}\rangle}
  &= \prod_{1\le i\le j\le n}(-\frac{1}{q_{ij}})^{u_jv_i+(a_j-1-u_j-v_j)(u_i+v_i)}\\
  &= \prod_{1\le i\le j\le n}(-\frac{1}{q_{ij}})^{(a_j-1-u_j)u_i+(a_j-1-v_j)v_i-u_iv_j}\\
  &= {\bf q}^{\langle \mathbf{a-1-u|u}\rangle} {\bf q}^{\langle \mathbf{a-1-v|v}\rangle} {\bf q}^{-\langle \mathbf{v|u}\rangle}
\end{align*}
where ${\bf q}^{-\langle \mathbf{v|u}\rangle}=\frac{1}{{\bf q}^{\langle \mathbf{v|u}\rangle}}$, it follows that $S(x_{\bf u}x_{\bf v}) = S(x_{\bf v})S(x_{\bf u})$ for \ $\forall \ {\bf u}, \ {\bf v}\in V, \ \ {\bf u} + {\bf v}\le {\bf a-1}$ if and only if
$$g_{\mathbf{a-1-(u+v), u+v}}=({\bf q}^{\langle\mathbf{v|u}\rangle})^2 \ g_{\mathbf{a-1-u, u}} \ g_{\mathbf{a-1-v, v}}, \ \ \forall \ {\bf u}, \ {\bf v}\in V, \ \ {\bf u} + {\bf v}\le {\bf a-1}.
\eqno(3.5)$$

Assume that (3.5) holds.  Exchanging the position of ${\bf u}$ and ${\bf v}$, one gets
\[
  ({\bf q}^{\langle\mathbf{u|v}\rangle})^2=({\bf q}^{\langle \mathbf{v|u}\rangle})^2, \ \ \ \ \mbox{i.e.,} \ \ \ \
  \prod_{1\le i\le j\le n}q_{ij}^{-2u_jv_i}=\prod_{1\le i\le j\le n}q_{ij}^{-2v_ju_i}.
\]
Taking $\mathbf{v=e_{i}, \ u=e_{j}}$ with $1\le  i<j\le  n$, one gets \ $q_{i, j}^2=1$ for all $1\leq i, j\le  n$. Then
$({\bf q}^{\langle\mathbf{u|v}\rangle})^2= \prod\limits_{1\le i\le j\le n}q_{ij}^{-2u_jv_i}=1$,
and by (3.6) one gets
$$g_{\mathbf{a-1-(u+v), u+v}}=g_{\mathbf{a-1-u, u}} \ g_{\mathbf{a-1-v, v}}, \ \ \forall \ {\bf u}, \ {\bf v}\in V, \ \ {\bf u} + {\bf v}\le {\bf a-1}.
$$
That is, the conditions (3.3) and (3.4) are satisfied.

Conversely,  assume that the conditions (3.3) and (3.4) are satisfied.
Since  \ $q_{i, j}^2=1$ for all $1\leq i, j\le  n$,
$({\bf q}^{\langle\mathbf{v|u}\rangle})^2= \prod\limits_{1\le i\le j\le n}q_{ij}^{-2u_iv_j}=1$, and then
(3.5) holds, i.e., $S$ is an anti-homomorphism of algebra.

In conclusion, $S$ is an anti-homomorphism of algebra if and only if the conditions (3.3) and (3.4) are satisfied. \end{proof}

\subsection{\bf When $S$ is a coalgebra  anti-homomorphism?}
Next, we give a necessary and sufficient condition such that
$S$ is a coalgebra anti-homomorphism.

\begin{lemma} \label{antipode2} \ \ Let \ $\Delta$ and \ $\varepsilon$ be given by $(3.1)$, and \ $S: A\longrightarrow A$  the $k$-linear map given by $(3.2)$. Then $S$ is a coalgebra anti-homomorphism  \ $(A, \ \Delta, \ \varepsilon)$ if and only if
$$g^2_{\mathbf{v,a-1-v}} = {\bf q}^{-\langle \mathbf{v|a-1-v}\rangle} \ {\bf q}^{-\langle \mathbf{a-1-v|v}\rangle}, \ \ \forall \ {\bf v}\in V.
\eqno{(3.6)}$$
\end{lemma}
\begin{proof} By (3.1) and (3.2) one has
\begin{align*}
\varepsilon(S(x_\mathbf{v}))& =\delta_{\mathbf{v,0}}=\varepsilon(x_\mathbf{v}), \ \ \forall \ \mathbf{v}\in V;
\\
\Delta(S(1)) & = 1\otimes 1 = S\otimes S\circ\tau\circ(\Delta(1)),
\end{align*}
where $\tau$ is the twist map.  For any ${\bf 0}\ne \mathbf{v\ne a-1}$, one has
\begin{align*}
  \Delta(S(x_\mathbf{v})) &= g_{\mathbf{a-1-v,v}} \ {\bf q}^{\langle \mathbf{a-1-v|v}\rangle} \ \Delta(x_\mathbf{v})\\
  &=g_{\mathbf{a-1-v,v}} \ {\bf q}^{\langle \mathbf{a-1-v|v}\rangle} (1\otimes x_\mathbf{v} +x_\mathbf{v}\otimes 1)\\
  &=S\otimes S\circ\tau\circ(\Delta(x_\mathbf{v})).
\end{align*}
For ${\bf v}=\mathbf{a-1}$, one has
\begin{equation*}
  \Delta(S(x_\mathbf{a-1}))    = \Delta(x_\mathbf{a-1})=1\otimes x_\mathbf{a-1} +x_\mathbf{a-1}\otimes 1 + \sum\limits_{\substack{{\bf 0\ne v}\in V\\{\bf v} \ne {\bf a-1}}} \ g_{{\bf v}, {\bf a-1 -v}} \ x_\mathbf{v}\otimes x_\mathbf{a-1 -v};
\end{equation*}
while
\begin{align*}
  S\otimes S\circ\tau\circ(\Delta(x_\mathbf{a-1})) &=1\otimes x_\mathbf{a-1} +x_\mathbf{a-1}\otimes 1 + \sum\limits_{\substack{{\bf 0}\ne {\bf v}\in V\\{\bf v} \ne {\bf a-1}}} \ g_{\mathbf{v, a-1-v}} \ S(x_\mathbf{a-1-v})\otimes S(x_\mathbf{v})\notag\\
  & = 1\otimes x_\mathbf{a-1} +x_\mathbf{a-1}\otimes 1 \notag\\
  & + \sum\limits_{\substack{{\bf 0}\ne {\bf v}\in V\\{\bf v} \ne {\bf a-1}}} \ g_{\mathbf{a-1-v, v}} \ g_{\mathbf{v, a-1-v}}^2 \ {\bf q}^{\langle \mathbf{a-1-v|v}\rangle} \ {\bf q}^{\langle \mathbf{v|a-1-v}\rangle} \ x_\mathbf{a-1-v}\otimes x_\mathbf{v}.
\end{align*}
Comparing the coefficient of $x_\mathbf{v}\otimes x_\mathbf{a-1-v}$ with \ ${\bf 0}\ne \mathbf{v}\neq {\bf a-1}$, one gets
$$g_{\mathbf{v, a-1-v}}= g_{\mathbf{v, a-1-v}} \ g^2_{\mathbf{a-1-v, v}} \ {\bf q}^{\langle\mathbf{v|a-1-v}\rangle} \ {\bf q}^{\langle \mathbf{a-1-v|v}\rangle}, \ \  \forall \ \mathbf{v}\in V, \ {\bf 0}\ne \mathbf{v}\neq {\bf a-1}.$$
That is $$g^2_{\mathbf{a-1-v, v}} = {\bf q}^{-\langle \mathbf{a-1-v|v}\rangle}{\bf q}^{-\langle \mathbf{v|a-1-v}\rangle}, \ \  \forall \ \mathbf{v}\in V, \ {\bf 0}\ne \mathbf{v}\neq {\bf a-1}$$
where \ ${\bf q}^{-\langle \mathbf{u|v}\rangle}=\frac{1}{{\bf q}^{\langle \mathbf{u|v}\rangle}}$, or equivalently,
$$g^2_{\mathbf{v, a-1-v}} = {\bf q}^{-\langle \mathbf{v|a-1-v}\rangle} \ {\bf q}^{-\langle \mathbf{a-1-v|v}\rangle}, \ \  \forall \ \mathbf{v}\in V, \ {\bf 0}\ne \mathbf{v}\neq {\bf a-1}.$$
Since this equality also holds for $\mathbf{v} = {\bf 0}$
and for $\mathbf{v} = {\bf a-1}$,   $S$ is a coalgebra anti-homomorphism of  \ $(A, \Delta, \varepsilon)$ if and only if (3.6) holds. \end{proof}

\subsection{\bf Existence of $\{g_{\bf v, a-1-v}\}$.}
Now, we prove the existence of \ $\prod\limits_{1\le i\le n}a_i$ \ nonzero elements \ $g_{\mathbf{v, a-1-v}}\in k$ with \ ${\bf v}\in V$ and \ $g_{\mathbf{0, a-1}} = 1 = g_{\mathbf{a-1, 0}}$, satisfying $(3.4)$ and $(3.6)$, under some conditions.

\begin{lemma} \label{antipode3} \ Assume that \ $\sqrt{-1}\in k$ and \ $q_{ij}^2 = 1$ for $1\le i, j\le n$. Then there  are indeed \ $\prod\limits_{1\le i\le n}a_i$ \ nonzero elements \ $g_{{\bf v}, {\bf a-1-v}}$ in $k$ with \ ${\bf v}\in V$ and \ $g_{\mathbf{0, a-1}} = 1 = g_{\mathbf{a-1, 0}}$, such that   \ $(3.4)$ and $(3.6)$ are satisfied, namely

$$g_{\mathbf{a-1-(u+v), u+v}}=g_{\mathbf{a-1-u, u}} \ g_{\mathbf{a-1-v, v}}, \ \ \forall \ {\bf u}, \ {\bf v}\in V, \ {\bf u} + {\bf v}\le {\bf a-1}, \eqno(3.4)$$

$$g^2_{\mathbf{v, a-1-v}} = {\bf q}^{-\langle \mathbf{a-1-v|v}\rangle}{\bf q}^{-\langle \mathbf{v|a-1-v}\rangle}, \ \ \forall \ {\bf v}\in V. \eqno(3.6)
$$

\noindent In particular, \ $g^2_{\mathbf{v, a-1-v}} = \pm 1,  \ \forall \  {\bf v}\in V.$
\end{lemma}
\begin{proof} \ Since \ $\sqrt{-1}\in k$ and \ $\prod\limits_{1\le j\le n}(-q_{ij})^{a_j-1} = \pm 1$, thus one can choose
\[
g_{\bf a-1-e_i,e_i} = h_i\prod_{1\le j\le n}(-q_{ij})^{\frac{a_j-1}{2}}\in k,\ \ \text{where} \ h_i\in \{1,-1\}, \ \   1\le i\le n.
\]

(If \ ${\bf q = -1}$, i.e., $q_{ij} = -1$ for \ $1\le i, j\le n$, then \ $\prod\limits_{1\le j\le n}(-q_{ij})^{a_j-1} = 1$ and \ $g_{\bf a-1-e_i,e_i} = h_i\prod\limits_{1\le j\le n}(-q_{ij})^{\frac{a_j-1}{2}}$ is always in \ $k$.
Thus, in this case the assumption \ $\sqrt{-1}\in k$ can be removed.)

Then, one takes \  $g_{\mathbf{a-1-v, v}}$ with \ ${\bf 0\ne v}\in V,\ {\bf v\ne a-1}$ as
$$g_{\mathbf{a-1-v,v}} =\prod_{1\le i\le n}(g_{\mathbf{a-1-e_i,e_i}})^{v_i}.
$$
Since \ $g_{\mathbf{a-1, 0}} =1$, it is clear that this also holds for ${\bf v = 0}$ (note that in this case all \ $v_i = 0$).

{\bf Claim:}  One can choose \ $h_i\in \{1, -1\}$ for \ $1\le i\le n$,  such that \
$g_{\mathbf{a-1-v,v}} =\prod\limits_{1\le i\le n}(g_{\mathbf{a-1-e_i,e_i}})^{v_i}$  is also true for ${\bf v = a-1}$, i.e.,
 $$\prod\limits_{1\le i\le n}(g_{\mathbf{a-1-e_i,e_i}})^{a_i-1} = 1.$$

In fact, notice that \ $q_{ij}^2=1$ and \ $q_{ij}q_{ji}=1$,  hence \  $q_{ij}=q_{ji}.$ Thus one has
$$g_{\bf a-1-e_i,e_i}^{a_i-1} = h_i^{a_i-1} \prod_{1\le j\le n}(-q_{ij})^{\frac{(a_i-1)(a_j-1)}{2}},\ \ 1\le i\le n.$$ Also note that $q_{ii} = -1$ for $1\le i\le n$. One has
\begin{align*} \prod_{1\le i\le n}g_{\bf a-1-e_i,e_i}^{a_i-1} & = \prod_{1\le i\le n} (h_i^{a_i-1}\prod_{1\le j\le n}(-q_{ij})^{\frac{(a_i-1)(a_j-1)}{2}})
  \\ & = (\prod_{1\le i\le n}h_i^{a_i-1})(\prod_{1\le i\le n} \prod_{1\le j\le n}(-q_{ij})^{\frac{(a_i-1)(a_j-1)}{2}})
  \\ & = (\prod_{1\le i\le n}h_i^{a_i-1} )(\prod_{1\le i,j\le n}(-q_{ij})^{\frac{(a_i-1)(a_j-1)}{2}})\\
    &=(\prod_{1\le i\le n}h_i^{a_i-1} )(\prod_{1\le i< j\le n}(-q_{ij})^{\frac{(a_i-1)(a_j-1)}{2}})(\prod_{1\le j< i\le n}(-q_{ij})^{\frac{(a_i-1)(a_j-1)}{2}})\\
    &=(\prod_{1\le i\le n}h_i^{a_i-1} )(\prod_{1\le i< j\le n}(-q_{ij})^{\frac{(a_i-1)(a_j-1)}{2}})(\prod_{1\le i< j\le n}(-q_{ji})^{\frac{(a_i-1)(a_j-1)}{2}})\\
    &= (\prod_{1\le i\le n}h_i^{a_i-1} )(\prod_{1\le i<j \le n}(-q_{ij})^{(a_i-1)(a_j-1)}).
\end{align*}
Since \ $q_{ij}=\pm 1$, it follows that \  $\prod\limits_{1\le i<j \le n}(-q_{ij})^{(a_i-1)(a_j-1)}=\pm 1$. Choose \ $h_i\in \{1, -1\}$ for \ $1\le i\le n$ as follows.

If \ $\prod\limits_{1\le i<j \le n}(-q_{ij})^{(a_i-1)(a_j-1)}=1$, then we choose \ $h_i=1$ for all $1\le i\le n$.

If \  $\prod\limits_{1\le i<j \le n}(-q_{ij})^{(a_i-1)(a_j-1)}=-1$,  then there is \ $i_0$ with \ $1\le i_0\le n$ such that $a_{i_0}$ is even, then we can choose \ $h_{i_0}=-1$ and $h_i=1$ for all \ $i\ne i_0$.

In the both cases, one has \ $\prod\limits_{1\le i\le n}(g_{\mathbf{a-1-e_i,e_i}})^{a_i-1} = 1.$ This proves {\bf Claim}.

 Up to now, we have already fixed \ $\prod\limits_{1\le i\le n} a_i$ \ nonzero elements \ $g_{\mathbf{v, a-1-v}}\in k$ in the way
$$g_{\mathbf{a-1-v,v}} =\prod_{1\le i\le n}(g_{\mathbf{a-1-e_i,e_i}})^{v_i}, \ \ \forall \ {\bf v}\in V. \eqno(3.7)$$
It remains to show that these nonzero elements \ $g_{\mathbf{v, a-1-v}}\in k$ with  \ ${\bf v}\in V$ satisfy $(3.4)$ and $(3.6)$.
Indeed, for any $\ {\bf u},  \ {\bf v}\in V, \ {\bf u} + {\bf v}\le {\bf a-1},$ by (3.7) one has
  \begin{align*}
    g_{\mathbf{a-1-(u+v), u+v}} &= \prod_{1\le i\le n} g_{\mathbf{a-1-e_i, e_i}}^{u_i+v_i}\\
    &= \prod_{1\le i\le n} g_{\mathbf{a-1-e_i, e_i}}^{u_i} \prod_{1\le i\le n} g_{\mathbf{a-1-e_i, e_i}}^{v_i}\\
    & = g_{\mathbf{a-1-u, u}} \ g_{\mathbf{a-1-v, v}}
  \end{align*}
i.e.,  (3.4) is satisfied. Also, for \ ${\bf v}\in V$ by (3.7) one has
\begin{align*}
  g_{\mathbf{v,a-1-v}}^2
  &=\prod_{1\le i\le n}(g_{\mathbf{a-1-e_i,e_i}}^2)^{a_i-1-v_i} \notag\\
  &=\prod_{1\le i\le n}(\prod_{1\le j\le n}(-q_{ij})^{a_j-1})^{a_i-1-v_i} \notag\\
  &=\prod_{1\le i,j\le n} (-q_{ij})^{(a_j-1)(a_i-1-v_i)}.
\end{align*}
On the other hand, \ $q_{ij}=q_{ji}$ for $1\le i,j\le n$. By $q_{ii} = -1$ for $1\le i\le n$, one has
\begin{align*}
  {\bf q}^{-\langle \mathbf{a-1-v|v}\rangle}{\bf q}^{-\langle \mathbf{v|a-1-v}\rangle}
  &=\prod_{1\le i<j\le n} (-q_{ij})^{(a_j-1-v_j)v_i}\prod_{1\le i<j\le n} (-q_{ij})^{(a_i-1-v_i)v_j}\notag\\
  &=\prod_{1\le i<j\le n} (-q_{ij})^{(a_j-1-v_j)v_i} \prod_{1\le j<i\le n} (-q_{ji})^{(a_j-1-v_j)v_i}\notag\\
   &=\prod_{1\le i<j\le n} (-q_{ij})^{(a_j-1-v_j)v_i} \prod_{1\le j<i\le n} (-q_{ij})^{(a_j-1-v_j)v_i}\notag\\
    &=\prod_{1\le i,j\le n} (-q_{ij})^{(a_j-1-v_j)v_i}.
\end{align*}
To compare  $$g_{\mathbf{v,a-1-v}}^2 =\prod_{1\le i,j\le n} (-q_{ij})^{(a_j-1)(a_i-1-v_i)}$$ with  $${\bf q}^{-\langle \mathbf{a-1-v|v}\rangle}{\bf q}^{-\langle \mathbf{v|a-1-v}\rangle} = \prod_{1\le i,j\le n} (-q_{ij})^{(a_j-1-v_j)v_i}$$ one has
\begin{align*}
  & \prod_{1\le i,j\le n} (-q_{ij})^{(a_j-1)(a_i-1-v_i)-(a_j-1-v_j)v_i} \\
=& \prod_{1\le i,j\le n} (-q_{ij})^{v_iv_j+(a_i-1)(a_j-1)-2v_i(a_j-1)}\\
=& \prod_{1\le i,j\le n} (-q_{ij})^{v_iv_j+(a_i-1)(a_j-1)} \\
= & \prod_{1\le i<j\le n} (-q_{ij})^{v_iv_j+(a_i-1)(a_j-1)} + \prod_{1\le j<i\le n} (-q_{ij})^{v_iv_j+(a_i-1)(a_j-1)}\\
= & \prod_{1\le i<j\le n} (-q_{ij})^{v_iv_j+(a_i-1)(a_j-1)} + \prod_{1\le i<j\le n} (-q_{ji})^{v_iv_j+(a_i-1)(a_j-1)}
\\  =& \prod_{1\le i<j\le n} (-q_{ij})^{2v_iv_j+2(a_i-1)(a_j-1)} = 1.
\end{align*}
This proves \ $g^2_{\mathbf{v, a-1-v}} = {\bf q}^{-\langle \mathbf{a-1-v|v}\rangle}{\bf q}^{-\langle \mathbf{v|a-1-v}\rangle}, \ \ \forall \ {\bf v}\in V,$
i.e., (3.6) is satisfied. This completes the proof.
\end{proof}

\subsection{\bf Proof of Theorem \ref{mainthm}. } Assume that $A$ admits a bi-Frobenius algebra structure  \ $(A, \ \phi, \ t, \ S)$, where the comultiplication \ $\Delta$ and the counit $\varepsilon$
are of the form \ $(3.1)$. By Lemma \ref{dt}(3), \ $t$ is a right integral of $A$. On the other hand, by Lemma \ref{frobeniushomomorphism}(2),
the space of right integrals of \ $A$ is \ $k \ x_{{\bf a-1}}$. It follows that $t = c \ x_{{\bf a-1}}$ for some $0\ne c\in k$.

By Lemma \ref{dt}(3), \ $\phi$ is a right cointegral of \ $A$. On the other hand, by Lemma \ref{frobeniuscoalg}, the space of right cointegrals of \ $A$ is \ $k \ (x_{{\bf a-1}})^*$. It follows that $\phi = d \ (x_{{\bf a-1}})^*$ for some $0\ne d\in k$.

By \ $$\Delta(t) = \Delta(cx_{{\bf a-1}}) = c(1_A\otimes x_{{\bf a-1}} + x_{{\bf a-1}}\otimes 1_A + \sum\limits_{\substack {{\bf 0}\ne {\bf v}\in V\\ {\bf v}\ne {\bf a-1}}} \ g_{{\bf v}, {\bf a-1 -v}} \ x_\mathbf{v}\otimes x_\mathbf{a-1 -v})$$ and $$1_A = S(1_A) = \sum \phi(t_1)t_2 = \sum d x_{{\bf a-1}}^*(t_1)t_2 = cd 1_A$$ one gets $d = \frac{1}{c}$.
Thus,  $(A, \ \frac{1}{c} (x_{{\bf a-1}})^*, \ cx_{{\bf a-1}}, \ S)$  is a bi-Frobenius algebra. By Lemma \ref{dt}(4),
$(A, \ (x_{{\bf a-1}})^*, \ x_{{\bf a-1}}, \ S)$  is also a bi-Frobenius algebra. In particular, \ $S$ is an algebra anti-homomorphism, and hence by Lemma \ref{antipode1}, \ $q_{ij}^2=1$ \ for \ $1\le i, j\le n$.

Conversely, assume that \ $\sqrt{-1}\in k$ and \ $q_{ij}^2=1$ \ for \ $1\le i, j\le n$. By Lemma \ref{frobeniushomomorphism}(2), $(A, \ (x_{{\bf a-1}})^*)$ is a Frobenius algebra.

Choose nonzero elements \ $g_{\mathbf{v, a-1-v}}\in k$ with \ ${\bf v}\in V$ and \
$g_{\mathbf{0, a-1}} = 1 = g_{\mathbf{a-1, 0}}$, such that \ $(3.4)$ and $(3.6)$ are satisfied.
Lemma \ref{antipode3} guarantees the existence of these $g_{\mathbf{v, a-1-v}}$'s.

Using these chosen $g_{\mathbf{v, a-1-v}}$'s, consider the $k$-linear maps \ $\Delta$ and \ $\varepsilon$, as defined in $(3.1)$. Then \ $\varepsilon$ is an algebra homomorphism. By Lemma \ref{frobeniuscoalg}, \ $(A, \ x_{{\bf a-1}})$ is a Frobenius coalgebra. Clearly, the identity \ $1$ is a group-like element.

By Lemmas \ref{antipode1} and \ref{antipode2},  the $k$-linear map $S: A\longrightarrow A$ given by $(3.2)$ is
an algebra anti-homomorphism of $A$ and a coalgebra anti-homomorphism of \ $(A, \ \Delta, \ \varepsilon)$. By definition, $(A, \ x_{{\bf a-1}}^*$, \ $x_{{\bf a-1}}, \ S)$  is a bi-Frobenius algebra.

This completes the proof. \hfill $\square$

By \ ${\bf q = -1}$ we mean \ $q_{ij} = -1$ for \ $1\le i, j\le n$. From the proof of Lemma \ref{antipode3} we see

\begin{corollary} \label{cormainthm} \  Complete intersection rings
\ $A = A({\bf -1}, \ a_1, \ \cdots, \ a_n)$ admit a bi-Frobenius algebra structure \
$(A, \ (x_{{\bf a-1}})^*, \ x_{{\bf a-1}}, \ S)$,
where the comultiplication \ $\Delta$ and the counit $\varepsilon$
are of the form \ $(3.1)$.
\end{corollary}

\section{\bf When a quantum complete intersection admits a Hopf algebra structure?} \  \ The following main result of this section shows that only in very special
cases quantum complete intersections can admit Hopf algebra structures.

\begin{theorem}\label{qcibialgebra} For the quantum complete intersection $A = A({\bf q},a_1, \cdots, a_n)$ over field $k$, the following are equivalent.

$(1)$ \ $A$ admits a Hopf algebra structure$;$

$(2)$ \ $A$ admits a bialgebra structure$;$

$(3)$ \ $A$ is commutative, and each $a_i$ is a positive power of $p$, where $p = {\rm char} k$ is a prime.
\end{theorem}

\subsection{\bf Kummer's theorem} \
To prove Theorem \ref{qcibialgebra}, we need a consequence of Kummer's theorem.

Let $p$ be a prime. Denote by $\nu_p(x)$ the $p$-adic valuation of a nonzero integer $x$, i.e.,
the largest non-negative integer $s$ such that $p^s$ divides $x$.
For $n\in \mathbb{N}_0$, write $n$ as the expansion in base $p$:
$$n = n_0+ n_1p + \cdots +n_rp^r$$ where $0\le n_0, \cdots, n_r \le p-1.$
For $n, m\in \mathbb{N}_0$ with $n\ge m$, write $n, \ m$ and $t = n-m$ as the expansion in base $p$, with the digits $n_j$, $m_j$ and $t_j$ as above, respectively.
Let $\epsilon_j = 1$ if there is a carry in the $j$-th digit when adding $m$ and $t$ in base $p$, and let $\epsilon_j=0$ otherwise.
Then Kummer's theorem claims that the $p$-adic valuation of binomial coefficient $\binom{n}{m}$ is
\[
\nu_p(\binom{n}{m})=\sum_{j\ge 0}\epsilon_j.\]
See e.g. \cite[Theorem 2.6.7]{Moll2012}.

\begin{lemma}\label{kummer} \ {\rm (E. Kummer)} \ Let $n$ be a positive integer and $p$ a prime.
Assume that $p\mid n$ with $\nu_p(n)=r$. Then \ $p\nmid\binom{n}{p^r}.$
\end{lemma}
\begin{proof} \ For convenience write $m = p^r$.  Then the expansion of $m$ in base $p$ is just
$m = p^r$, i.e., $$m_{j} = \delta_{j, r} = \begin{cases}0, & j\ne r; \\ 1, &j = r. \end{cases}$$
To know $\nu_p(\binom{n}{p^r})$, let $t = n-m = p^r(\frac{n}{m}-1)$. Then the expansion of $t$ in base $p$ looks like
$$t = t_rp^r + t_{r+1}p^{r+1} + \cdots + n_{r+s}p^{r+s}.$$
It is clear that $t_r\ne p-1$. Otherwise, $n = m+t$ is divided by $p^{r+1}$, contradicts $\nu_p(n)=r$.

Thus, when adding $m$ and $t$ in base $p$, there is no carry in $r$-th digit, and hence there are no carries in  any digit.
By Kummer's theorem,  $\nu_p(\binom{n}{p^r}) = 0,$ \ i.e., \ $p\nmid\binom{n}{p^r}.$ \end{proof}

\subsection{\bf A main lemma}

\begin{lemma} \label{bialgebra} \ If there is an $a_i$ such that it is not a power of {\rm char} $k$, then the quantum complete intersection $A({\bf q}, a_1, \cdots, a_n)$ admits no bialgebra structures.

In particular, if \ ${\rm char} \ k= 0$, then any quantum complete intersection
admits no bialgebra structures.
\end{lemma}
\begin{proof} \  Without loss of generality, one may assume that $a_1$ is not a power of {\rm char} $k$. Assume otherwise that \ $A= A({\bf q}, a_1, \cdots, a_n)$ has a
bialgebra structure, say, with comultiplication $\Delta$ and counit $\varepsilon$. Thus
 \ $\Delta$ is an algebra homomorphism.
Write
\[
\Delta(x_1)=\sum_{\mathbf{u,v}\in V} {a_{\mathbf{u,v}}}x_{\mathbf{u}}\otimes x_{\mathbf{v}} \ \ \mbox{with} \  \ a_{\mathbf{u,v}}\in k.
\]
Since \ $A$ is a finite-dimensional local algebra, the kernel of the algebra homomorphism \ $\varepsilon: A\longrightarrow k$ is the unique maximal ideal of $A$. Thus \ $\varepsilon(x_{\mathbf{u}}) = \delta_{\mathbf{u}, \mathbf{0}}$,  where $\delta$ is the Kronecker symbol. By $(\varepsilon\otimes {\rm Id})\Delta\cong{\rm Id}\cong({\rm Id}\otimes \varepsilon)\Delta$, one has
\[
    \sum_{\mathbf{v}\in V} a_{\mathbf{0,v}}x_{\mathbf{v}}= x_1= \sum_{\mathbf{u}\in V} a_{\mathbf{u,0}}x_{\mathbf{u}}.
\]
Thus $a_{\mathbf{0, v}}=\delta_{\mathbf{v,e_1}}$ and $a_{\mathbf{u,0}}=\delta_{\mathbf{u,e_1}}$, and hence
\[
    \Delta(x_1)=1\otimes x_1 + x_1\otimes 1 +  \sum_{\mathbf{u,v}\in V/\{\mathbf{0}\}} {a_{\mathbf{u,v}}}x_{\mathbf{u}}\otimes x_{\mathbf{v}}.
\]
Write
\begin{align*}\Delta(x_1)^{a_1} & =
(1\otimes x_1 + x_1\otimes 1 +  \sum\limits_{\mathbf {u, v}\in V/\{\mathbf{0}\}} {a_{\mathbf {u,v}}} x_{\mathbf {u}} \otimes x_{\mathbf {v}})^{a_1}
\\ & = (1\otimes x_1 + x_1\otimes 1)^{a_1} + \Sigma_1 \\ & = \sum_{1\le i \le a_1-1}\binom{a_1}{i}  x_1^i\otimes x_1^{a_1-i} + \Sigma_1 \end{align*}
and write $\Sigma_1$ as a $k$-combination of elements in $\mathcal B\otimes \mathcal B$.
Notice that the degree of each term in $x_1^i\otimes x_1^{a_1-i} \ (1\le i \le a_1-1)$ is $a_1$;
and the degree of each term in $\Sigma_1$ is bigger than or equal to
$$a_1-i + 2i = a_1 + i$$ with $1\le i\le a_1$. Thus the degree of each term in $\Sigma_1$ is bigger than $a_1$.
Since  \ $\Delta$ is an algebra homomorphism, \ $(\Delta x_1)^{a_1}=\Delta(x_1^{a_1})=0$. It follows that
$\binom{a_1}{i}=0, \ 1\le i\le a_1-1$. This is absurd if char $k=0$.

So, suppose that ${\rm char} \ k = p > 0$ and $a_1$ is not a power of $p$.  If $p \nmid a_1$, then  $\binom{a_1}{1} = a_1\ne 0$ in $k$, a contradiction.
If the largest positive integer $t$ such that $p^t\mid a_1$ is $s$, then by Lemma \ref{kummer},
$p\nmid \binom{a_1}{p^s}$, and hence $\binom{a_1}{p^s}\ne 0$ in $k$, again a contradiction. This completes the proof. \end{proof}

\subsection{\bf Proof of Theorem \ref{qcibialgebra}.} \ The implication $(1) \Longrightarrow (2)$ is clear.

$(2) \Longrightarrow (3)$: \ Suppose that $A$ admits a bialgebra structure for some comultiplication $\Delta$ and counit $\varepsilon$. By {\rm Lemma \ref{bialgebra}}, ${\rm char} k = p$ should be a prime, and each $a_i$ is a positive power of $p$.

As in the proof of {\rm Lemma \ref{bialgebra}}, for any $1\le i\le n$ one has
\[
   \Delta(x_i)=1\otimes x_i + x_i\otimes 1 + \sum_{\mathbf{u,v}\in V/\{\mathbf{0}\}} {a_{\mathbf{u,v}}}x_{\mathbf{u}}\otimes x_{\mathbf{v}}
\]
where $a_{\bf u,v}\in k$. Then for any $1\le i,j\le n$, one has
\begin{align*}
  \Delta(x_ix_j) &=\Delta(x_i)\Delta(x_j)=1\otimes x_ix_j+x_i\otimes x_j+x_j\otimes x_i+x_ix_j\otimes 1+\Sigma
\end{align*}
with some $\Sigma\in A\otimes A$. On the other hand, one has
\begin{align*}
    \Delta(x_ix_j) &= \Delta(-q_{ij}x_jx_i) = -q_{ij}\Delta(x_j)\Delta(x_i)\\
    &=-q_{ij}(1\otimes x_jx_i+x_i\otimes x_j+x_j\otimes x_i+x_jx_i\otimes 1+\Sigma')\\
    &=1\otimes x_ix_j-q_{ij}x_i\otimes x_j-q_{ij}x_j\otimes x_i+x_ix_j\otimes 1-q_{ij}\Sigma'
\end{align*}
with some $\Sigma'\in A\otimes A$. Thus
\begin{align*} & 1\otimes x_ix_j+x_i\otimes x_j+x_j\otimes x_i+x_ix_j\otimes 1+\Sigma \\ & = 1\otimes x_ix_j-q_{ij}x_i\otimes x_j-q_{ij}x_j\otimes x_i+x_ix_j\otimes 1-q_{ij}\Sigma'.\end{align*}
Since $\mathcal B\otimes \mathcal B$ is a set of basis of $A\otimes A$
and the degree of each term in $\Sigma$ and $\Sigma'$ is bigger than or equal to $3$ (see Subsection 2.3), one gets
\[
q_{ij}=-1, \ \ \ \forall\ 1\le i, j\le n
\]
i.e., $A$ is commutative.

$(3) \Longrightarrow (1)$: Assume that $A$ is commutative, and that $a_i = p^{r_i}$ for some positive integer $r_i$ for all $1\le i\le n$, where $p = {\rm char} k$. Note that
$\mathcal B = \{x_{\mathbf{v}} = x_1^{v_1}\cdots x_n^{v_n} \ | \ \mathbf{v}\in V\}$ is a basis of $A$, where $V=\{ \mathbf{v} = (v_1, \cdots, v_n)\in \mathbb{N}_0^n \ | \ v_i\le a_i-1,  \ 1\le i\le n\}.$ Define \  $\Delta: A \longrightarrow A\otimes A$, $\varepsilon: A\longrightarrow k$ and $S: A\longrightarrow A$ to be the $k$-linear maps as follows:
$$\left\{
\begin{array}{l}
\varepsilon(x_\mathbf{v})=\delta_{\mathbf{v,0}};\\
\Delta(x_\mathbf{v})=\prod\limits_{1\le i\le n}\sum\limits_{0\le t\le v_i}\binom{v_i}{t}x_i^t\otimes x_i^{v_i-t}; \\

S(x_{\bf v})=(-1)^{|{\bf v}|}x_{\bf v}, \\
\end{array}
\right.$$
It is clear that $\varepsilon$ is an algebra homomorphism. We claim that $\Delta$ is an algebra homomorphism.

In fact, by definition $\Delta(x_i)=1\otimes x_i +x_i\otimes 1$ for  $1\le i\le n$ and $\Delta(x_{\bf v})=\prod\limits_{1\le i\le n}(\Delta (x_i))^{v_i}$ for ${\bf v}\in V$. Thus for ${\bf u,v}\in V$, if ${\bf u+v}\in V$, then
$$\Delta (x_{\bf u+v})=\prod\limits_{1\le i\le n}\Delta(x_i)^{u_i+v_i}=\prod\limits_{1\le i\le n}\Delta(x_i)^{u_i}\Delta(x_i)^{v_i} = \Delta(x_{\bf u})\Delta(x_{\bf v}).$$
If ${\bf u+v}\notin V$, then $x_{\bf u}x_{\bf v}=0$, i.e., there is an integer $l$ with $1\le l \le n$ such that $u_{l}+v_{l}\ge a_{l}$.
Since {\rm char} $k=p$ is a prime and $a_{l}=p^{r_{l}}$,  it follows that
\[\Delta(x_l)^{a_l} = (1\otimes x_l +x_l\otimes 1)^{p^{r_l}}
= 1\otimes x_l^{p^{r_l}} +x_l^{p^{r_l}} \otimes 1 = 1\otimes x_l^{a_l} +x_l^{a_l} \otimes 1 = 0\]
where we have used the rule $(a+b)^{p^r} = a^{p^r} + b^{p^r}.$
Hence $\Delta(x_l)^{u_l+v_l} = 0$, since $u_{l}+v_{l}\ge a_{l}$.
Thus \begin{align*}
    \Delta(x_{\bf u}x_{\bf v})=0&=\Delta(x_l)^{u_l+v_l}\prod\limits_{1\le i\le n,\ i\ne l}\Delta(x_i)^{u_i+v_i}\\
    &= \prod\limits_{1\le i\le n}\Delta(x_i)^{u_i}\Delta(x_i)^{v_i}=\Delta(x_{\bf u})\Delta(x_{\bf v}).
\end{align*}
This proves the claim.

Since $\Delta$ is an algebra homomorphism, it follows that $\Delta\otimes {\rm Id}$ and ${\rm Id}\otimes\Delta$ are also algebra homomorphisms. One has
\begin{align*}
  (\Delta\otimes {\rm Id})\Delta(x_{\bf v}) &=\Delta\otimes {\rm Id}(\prod\limits_{1\le i\le n}\Delta (x_i)^{v_i})\\
  &=\prod\limits_{1\le i\le n}((\Delta\otimes {\rm Id})\Delta (x_i))^{v_i}\\
  &=\prod\limits_{1\le i\le n}(1\otimes 1\otimes x_i+ 1\otimes x_i\otimes 1+x_i\otimes 1\otimes 1 )^{v_i}\\
  &=\prod\limits_{1\le i\le n}(({\rm Id}\otimes \Delta)\Delta (x_i))^{v_i}\\
  &=({\rm Id}\otimes \Delta)\Delta(x_{\bf v}), \ \text{ for } {\bf v}\in V.
\end{align*}
This shows the coassociativity of $\Delta$. It is clear that the counitary property $(\varepsilon\otimes {\rm Id})\Delta\cong {\rm Id}\cong({\rm Id}\otimes \varepsilon)\Delta$ is satisfied. Thus $(A,\Delta,\varepsilon)$ is a coalgebra.

Finally, one can show that $S$ is an antipode. For any $\mathbf{v}\in V$,
\begin{align*}
    S*{\rm Id}(x_{\bf v})&=m\circ(S\otimes {\rm Id})\Delta(x_{\bf v}) \\
    &= m\circ(S\otimes {\rm Id})\prod_{1\le i\le n}\sum_{0\le t\le v_i}\binom{v_i}{t}x_i^t\otimes x_i^{v_i-t}\\
    &= \prod_{1\le i\le n}(\sum_{0\le t\le v_i}(-1)^t\binom{v_i}{t})x_i^{v_i}\\
    &= \varepsilon(x_{\bf v})1 ={\rm Id}*S(x_{\bf v}).
\end{align*}
Thus $A$ is a Hopf algebra. \hfill $\square$

\begin{corollary}\label{hopf} \ A quantum exterior algebra admits a bialgebra structure if and only if ${\rm char} \ k = 2$. In this case, it is a Hopf algebra.
\end{corollary}

\subsection{\bf A class of bi-Frobenius algebras which are not bialgebras.} \ Combined Theorem \ref{mainthm} with Theorem \ref{qcibialgebra}, one gets a large class of examples of bi-Frobenius algebras which are not bialgebras (and hence not Hopf algebras).

\begin{corollary} \label{bifrononbi} \ Assume that \ $\sqrt{-1}\in k$ and \ $q_{ij}^2=1$ \ for \ $1\le i, j\le n$. Then the quantum complete intersection $A = A({\bf q}, \ a_1, \ \cdots, \ a_n)$ admits a bi-Frobenius algebra structure$;$ moreover, if there is an $a_i$ such that it is not a power of \ ${\rm char} \ k$ $($in particular, if \ ${\rm char} \ k= 0)$,
then it admits no bialgebra structures, hence no Hopf algebra structures.
\end{corollary}

\section{\bf Bi-Frobenius algebra structures on quantum exterior algebras in two variables}

In the special case of quantum exterior algebras in two variables, Theorem \ref{mainthm} has a stronger form.
Namely, let  $A_q = A(q, 2, 2) = k\langle x_1, x_2\rangle/\langle x_1^2, \ x_2^2, \ x_1x_2 + qx_2x_1\rangle$.
The main result of this section is as follows.

\begin{theorem} \label{bifroontwovar} \ If $A_q$ admits a bi-Frobenius algebra structure,  then $q=\pm 1.$

Conversely, if \ $\sqrt{-1}\in k$ and \ $q=\pm 1,$  then $A_q$ admits a bi-Frobenius algebra structure.

Moreover, $A_q$ admits a Hopf algebra structure if and only if ${\rm char} \ k = 2$.
\end{theorem}

\subsection {\bf The condition on $q$} Denote a basis of $A_q$ by
$$b_0 =1, \ \ b_1 = x_1, \ \ b_2 = x_2, \ \ b_3= x_1x_2.$$
Consider the dual basis of $A_q^*$:  \ $b_i^*, \ i = 0, 1, 2, 3$, where \ $b_i^*(b_j) = \delta_{ij}, \ \forall \ 0\le i, j\le 3$. By Lemma \ref{frobeniushomomorphism} one has

\begin{fact}\label{2var} All the Frobenius homomorphisms of
$A_q$ are \ $\phi= c_0 b_0^* + c_1 b_1^* + c_2 b_2^* + c_3 b_3^*$, where $c_i\in k \ \text{and} \ c_3\neq 0$.
\end{fact}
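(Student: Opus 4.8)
The plan is to recognize $A_q$ as the special quantum complete intersection $A(\mathbf{q}, a_1, a_2)$ with $n = 2$, $a_1 = a_2 = 2$, and $\mathbf{q} = (q_{ij})$ determined by $q_{11} = q_{22} = -1$, $q_{12} = q$, $q_{21} = q^{-1}$ (so that $q_{ii} = -1$ and $q_{12}q_{21} = 1$ as required), and then to read off the conclusion directly from Lemma \ref{frobeniushomomorphism}(1).

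First I would set up the dictionary between the two notations. Here the index set is $V = \{(0,0),\, (1,0),\, (0,1),\, (1,1)\}$, with $\mathbf{a-1} = (1,1)$. Writing $x_{\mathbf{v}} = x_1^{v_1} x_2^{v_2}$ as in the general setup, one has $x_{(0,0)} = 1 = b_0$, $x_{(1,0)} = x_1 = b_1$, $x_{(0,1)} = x_2 = b_2$, and $x_{(1,1)} = x_1 x_2 = b_3$, so in particular the Frobenius element $x_{\mathbf{a-1}}$ equals $b_3$. Dualizing this identification of bases gives $x_{(0,0)}^* = b_0^*$, $x_{(1,0)}^* = b_1^*$, $x_{(0,1)}^* = b_2^*$, and $x_{(1,1)}^* = b_3^*$.

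Then I would invoke Lemma \ref{frobeniushomomorphism}(1), which asserts that every Frobenius homomorphism of $A(\mathbf{q}, a_1, \dots, a_n)$ is of the form $\phi = \sum_{\mathbf{v}\in V} c_{\mathbf{v}} x_{\mathbf{v}}^*$ with $c_{\mathbf{v}}\in k$ and $c_{\mathbf{a-1}}\neq 0$, and conversely that each such $\phi$ is a Frobenius homomorphism. Specializing to the present case and substituting the dictionary above turns this into $\phi = c_0 b_0^* + c_1 b_1^* + c_2 b_2^* + c_3 b_3^*$, where the condition $c_{\mathbf{a-1}}\neq 0$ becomes exactly $c_3 \neq 0$. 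This is the claimed form, so the statement follows.

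Since the result is a direct translation of a previously established general fact into the four-dimensional case, there is no genuine obstacle; the only point requiring care is to track the correspondence between the multi-index labels $\mathbf{v}\in V$ and the flat labels $b_0, b_1, b_2, b_3$, so that the nonvanishing condition lands on the correct coefficient $c_3$, namely the one attached to $b_3 = x_{\mathbf{a-1}}$, rather than on some other $c_i$.
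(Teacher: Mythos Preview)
Your proposal is correct and matches the paper's approach exactly: the paper simply writes ``By Lemma \ref{frobeniushomomorphism} one has'' and states the fact, so it too is obtaining the result by specializing Lemma \ref{frobeniushomomorphism}(1) to the case $n=2$, $a_1=a_2=2$. Your dictionary between the multi-index basis $\{x_{\mathbf v}\}$ and $\{b_0,b_1,b_2,b_3\}$ is set up correctly, and the nonvanishing condition $c_{\mathbf{a-1}}\ne 0$ does indeed become $c_3\ne 0$.
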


The following lemma is not a consequence of Theorem \ref{mainthm}, since the coalgebra structure in  Theorem \ref{mainthm} is special, as given in (3.1); but in the following lemma there are no restrictions on coalgebra structures.

\begin{lemma} \label{bifroontwovar1} If \ $A_q$ admits a bi-Frobenius algebra structure, then \ $q=\pm 1.$
\end{lemma}
\begin{proof} \ Suppose that \ $(A_q, \ \phi, \ t, \ S)$ is a bi-Frobenius algebra, with comultiplication $\Delta$ and counit $\varepsilon$.
By Fact \ref{2var}, \
$\phi = \sum\limits_{0\le i\le 3}c_ib_i^*$ with $c_3\ne 0$. By Lemma \ref{dt}(3), \ $t$ is a right integral, i.e., \ $t a = t \ \varepsilon(a)$ for any \ $a\in A_q$.
Then \  $tx_1 = tx_2 = 0$,  and hence \ $t = cx_1x_2$ with $0\ne c\in k$.

By definition, the identity \ $1$ is a group-like element.  Write
\begin{equation*}
  \Delta(b_i)=1\otimes b_i + b_i\otimes 1 +\Sigma_i,  \ \ 1\le  i \le  3,
\end{equation*}
where  $$\Sigma_1 = \sum_{1\le  i,j\le  3}a_{ij}b_i\otimes b_j, \ \ \ \Sigma_2 = \sum_{1\le  i,j\le  3}b_{ij}b_i\otimes b_j, \ \ \ \Sigma_3 = \sum_{1\le  i,j\le  3}c_{ij}b_i\otimes b_j
$$
with all \ $a_{ij}, \ b_{ij}, \ c_{ij} \in k$. By Lemma \ref{dt}(3), \ $\phi$ is a right cointegral, i.e., \ $\phi(a)1 = \sum \phi(a_1)a_2$  for any \ $a\in A_q$. Thus
$\phi(t)1 =\sum\phi(t_1)t_2 = S(1)=1$. On the other hand,
$$\phi(t) = (\sum\limits_{0\le i\le 3}c_i b_i^*)(cx_1x_2) = cc_3,$$
so \  $c=1/c_3$. By Lemma \ref{dt}(4), without lose of generality, one can assume $c_3 = 1 = c$, i.e.,
$A_q$ has a bi-Frobenius algebra structure $(A_q, \ \phi = c_0b_0^*+c_1b_1^*+c_2b_2^*+b_3^*, \ t = x_1x_2, \ S)$.

By $a \leftharpoonup  f = \sum f(a_1)a_2, \ \forall \ a\in A_q, \ f\in A_q^*$, and
$$\Delta(t) = \Delta(x_1x_2)=1\otimes x_1x_2 + x_1x_2\otimes 1 + \sum_{1\le  i, j\le  3} c_{ij}b_i\otimes b_j = \sum t_1\otimes t_2$$   one has
\begin{align*}
    &x_1x_2 \leftharpoonup b_0^*= x_1x_2 \\
    &x_1x_2 \leftharpoonup b_1^*=c_{11}x_1+c_{12}x_2+c_{13}x_1x_2 \\
    &x_1x_2 \leftharpoonup b_2^*=c_{21}x_1+c_{22}x_2+c_{23}x_1x_2 \\
    &x_1x_2 \leftharpoonup b_3^*=c_{31}x_1+c_{32}x_2+c_{33}x_1x_2+1.
\end{align*}
Since \ $(A_q, \ t)$ is a Frobenius coalgebra, \ $A_q = t \leftharpoonup A_q^* = x_1x_2 \leftharpoonup A_q^*$. It follows that $\left(
    \begin{smallmatrix}
     c_{11} & c_{12} \\
     c_{21} & c_{22}
    \end{smallmatrix}
  \right)$ is an invertible matrix, i.e.,  $c_{11}c_{22}\neq c_{12}c_{21}$.

By definition $S: A\longrightarrow A$ is the $k$-linear map  given by $S(a)= \sum\phi(t_{1}a)t_{2}.$ Thus
\begin{align*} S(x_1) &= \sum(c_0b_0^*+c_1b_1^*+c_2b_2^*+b_3^*)(t_{1}x_1)t_{2} \\ & = c_1x_1x_2 + (c_0b_0^*+c_1b_1^*+c_2b_2^*+b_3^*)(x_2x_1) \sum_{1\le  j\le  3}c_{2j}b_j
\\ & = c_1x_1x_2 + b_3^*(x_2x_1)\sum_{1\le  j\le  3}c_{2j}b_j \\ & = -\frac{1}{q}[c_{21}x_1+c_{22}x_2+(c_{23}-qc_1)x_1x_2];\end{align*}
\begin{align*}S(x_2) & = \sum(c_0b_0^*+c_1b_1^*+c_2b_2^*+b_3^*)(t_{1}x_2)t_{2} \\ & = c_2x_1x_2 + (c_0b_0^*+c_1b_1^*+c_2b_2^*+b_3^*)(x_1x_2) \sum_{1\le  j\le  3}c_{1j}b_j
\\ & = c_2x_1x_2 + \sum_{1\le  j\le  3}c_{1j}b_j \\ & = c_{11}x_1+c_{12}x_2+(c_{13}+c_2)x_1x_2;\end{align*}
$$S(x_1x_2)  = \sum(c_0b_0^*+c_1b_1^*+c_2b_2^*+b_3^*)(t_{1}x_1x_2)t_{2} = b_3^*(x_1x_2)x_1x_2 =x_1x_2.$$

 \noindent So one has
$$\left\{
\begin{array}{l}
    S(x_1)=-\frac{1}{q}[c_{21}x_1+c_{22}x_2+(c_{23}-qc_1)x_1x_2]; \\
    S(x_2)=c_{11}x_1+c_{12}x_2+(c_2+c_{13})x_1x_2; \\
    S(x_1x_2)=x_1x_2.
\end{array}
\right. \eqno{(5.1)}$$

Since $S$ is a coalgebra anti-homomorphism, i.e.,  $\Delta(S(a))=\sum S(a_{2})\otimes S(a_{1})$ for any $a\in A_q$, one  has
\begin{equation*}
  \Delta(S(x_1x_2))=\Delta(x_1x_2)=1\otimes x_1x_2+x_1x_2\otimes 1+\Sigma_3,\\
\end{equation*}
and
\begin{align*}
  \sum S(t_{2})\otimes S(t_{1})
  &= S(1)\otimes S(x_1x_2)+S(x_1x_2)\otimes S(1)+ \sum_{1\le i,j\le 3} c_{ij}S(b_j)\otimes S(b_i)\\
  &= 1\otimes x_1x_2+x_1x_2\otimes 1+\sum_{1\le i,j\le 3} c_{ij}S(b_j)\otimes S(b_i).
\end{align*}
Thus  \  $\Sigma_3=\sum\limits_{1\le i,j\le 3} c_{ij}S(b_j)\otimes S(b_i)$, that is
\begin{equation*}
  \sum\limits_{1\le i, j\le 3} c_{ij}b_i\otimes b_j = \sum\limits_{1\le i, j\le 3} c_{ij}S(b_j)\otimes S(b_i).
\end{equation*}

\noindent Using (5.1) and comparing the coefficients of $x_1\otimes x_1, \ \ x_1\otimes x_2$  and \ \ $x_2\otimes x_1$, one gets
$$c_{11}=
  \frac{1}{q^2}c_{11}c_{21}^2 -\frac{1}{q}c_{11}c_{12}c_{21} -\frac{1}{q}c_{11}c_{21}^2 + c_{22}c_{11}^2,\eqno(5.2)$$
$$c_{12} =
  \frac{1}{q^2}c_{11}c_{21}c_{22}-\frac{1}{q}c_{11}c_{12}c_{22}-\frac{1}{q}c_{12}c_{21}^2+c_{22}c_{11}c_{12},\eqno(5.3)$$
$$c_{21} =
  \frac{1}{q^2}c_{11}c_{21}c_{22}-\frac{1}{q}c_{21}c_{12}^2-\frac{1}{q}c_{11}c_{21}c_{22}+c_{11}c_{12}c_{22}.\eqno(5.4)$$

By \ $S(x_1)^2=0$ one has \ $(c_{21}c_{22})(1-q)=0$. By \ $S(x_2)^2=0$ one has  \ $c_{11}c_{12}(1-q)=0$. By \ $S(x_1 x_2)=S(x_2) S(x_1)$ one has \ $c_{12}c_{21}-qc_{11}c_{22}=q^2.$
All together one has the following:

{\rm (i)} \ \ \ $c_{11}c_{22}\neq c_{12}c_{21};$

{\rm (ii)} \ \  $(c_{21}c_{22})(1-q)=0;$

{\rm (iii)} \ \ $c_{11}c_{12}(1-q)=0;$

{\rm (iv)} \ \ $c_{12}c_{21}-qc_{11}c_{22}=q^2.$

Now,  assume that $q\neq 1$. By (ii) and (iii), \ $c_{21}c_{22}=0 = c_{11}c_{12}.$ By (5.2) and (5.3) one has
$$c_{11} = \frac{1}{q^2}c_{11}c_{21}^2  -\frac{1}{q}c_{11}c_{21}^2 + c_{22}c_{11}^2\eqno(5.5)$$
$$c_{12} = -\frac{1}{q}c_{12}c_{21}^2\eqno(5.6)$$

By $c_{11}c_{22}\neq c_{12}c_{21}$ and $c_{21}c_{22}=0 = c_{11}c_{12}$, there are two situations:

\begin{enumerate}
  \item \ $c_{11}= 0 =c_{22}$ \ and \ $c_{12}\neq 0 \ne c_{21};$
  \item \ $c_{12}=0= c_{21}$ \ and \ $c_{11}\neq 0\ne c_{22}.$
\end{enumerate}

If (1) happens, then by  (iv) one has \  $c_{12}c_{21}=q^2$. By $(5.5)$ and (5.6) one gets  $c_{21}^2=-q = c_{12}^2$. Thus $q^4 = q^2$, and hence $q=-1$.

If (2) happens, then by  (iv) one has \ $c_{11}c_{22}=-q$.  By $(5.5)$ one gets $c_{11}c_{22}=1$. Thus $q=-1$.

 In conclusion, if $A_q$ has a bi-Frobenius algebra structure, then either $q =1$, or $q = -1$. That is, if  $q^2 \ne 1$, then
$A_q$ can not be a bi-Frobenius algebra.\end{proof}

\subsection{\bf Proof of Theorem \ref{bifroontwovar}.} \ In Lemma \ref{bifroontwovar1} we already know that if $A_q$ admits a bi-Frobenius algebra structure, then $q=\pm 1.$
Conversely, suppose that \ $k$ contains \ $\sqrt{-1}$ and  that $q=\pm 1.$  Then by Theorem \ref{mainthm}, $A_q$ has a bi-Frobenius algebra structure, with \ $\Delta$ and \ $\varepsilon$ given by (3.1).

Moreover, by Corollary \ref{hopf},  \ $A_q$ admits a Hopf algebra structure if and only if \ ${\rm char} \ k = 2$.  \hfill $\square$

\subsection{\bf A condition on field $k$} \ We point out that, if \ $k$ \ does not contain  $\sqrt{-1}$, then the exterior algebra
with two variables \ $A_1 = k\langle x_1, x_2\rangle/\langle x_1^2, \ x_2^2, \ x_1x_2 + x_2x_1\rangle$ admits no bi-Frobenius algebra structures.

\begin{proposition} \label{twovar} \ If \ $\sqrt{-1}\notin k$, then  the exterior algebra
with two variables \ $A_{1}$ admits no  bi-Frobenius algebra structures.
\end{proposition}
\begin{proof} \ Let $k$ be a field such that $\sqrt{-1}\notin k$.  Then \  ${\rm char}\ k \ne 2$. Assume  otherwise that $A_{1}$ has a bi-Frobenius algebra structure \ $(A_1, \ \phi, \ t, \ S)$, with comultiplication $\Delta$ and counit $\varepsilon$. As in the proof of Lemma \ref{bifroontwovar1} one gets (5.2),  (5.3), and (5.4), using $q= 1$ and rewriting them one has
$$c_{11} = -c_{11}c_{12}c_{21}+ c_{22}c_{11}^2= c_{11}(c_{11}c_{22}-c_{12}c_{21}) \eqno(5.2')$$
$$c_{12} = c_{11}c_{21}c_{22}-c_{12}c_{21}^2 = c_{21}(c_{11}c_{22}-c_{12}c_{21})\eqno(5.3')$$
$$c_{21} =-c_{21}c_{12}^2+c_{11}c_{12}c_{22}= c_{12}(c_{11}c_{22}-c_{12}c_{21}), \eqno(5.4')$$
and the condition (iv) becomes
$$c_{12}c_{21}-c_{11}c_{22}=1.\eqno({\rm iv}')$$
By substituting \ $({\rm iv}')$ into  \ $(5.2')$, $(5.3')$ and $(5.4')$, one gets $c_{11}=c_{22}=0$, and then \ $c_{12}^2=-1$. But $c_{12}\in k$, this contradicts
the assumption $c_{12} = \sqrt{-1} \notin k$.  This completes the proof.  \end{proof}

\section{\bf Non-isomorphic bi-Frobenius algebra structures on a commutative algebra}

We give two coalgebra structures on a complete intersection ring, such that it admits non-isomorphic bi-Frobenius algebra structures.

\subsection{\bf Coalgebra structure I}  Taking ${\bf q = -1}$ \ (i.e., \ $q_{ij} = -1$ for \ $1\le i, j\le n$) \ in quantum complete intersections, one gets  complete intersection ring
\[A = A({\bf -1}, a_1, \cdots, a_n) = k\langle x_1, \cdots, x_n\rangle/\langle x_i^{a_i}, \ x_i x_j-x_j x_i, \ \ 1\le i, \ j\le n\rangle.
\]
Thus \ $\mathcal B = \{x_{\mathbf{v}}= x_1^{v_1}\cdots x_n^{v_n} \ |  \ \mathbf{v}\in V\}$  is a basis of $A$,
where \ $V=\{ \mathbf{v} = (v_1, \cdots, v_n)\in \mathbb{N}_0^n \ | \ v_i\le a_i-1,  \ 1\le i\le n\}.$ Inspired by
the construction of path coalgebra of quivers (see D. Simson \cite {Simson2001}; also C. Cibils and M. Rosso \cite{CR2002}), we consider the $k$-linear maps \ $\varepsilon: A\longrightarrow k$ and \ $\Delta: A \longrightarrow A\otimes A$  given by

$$\left\{\begin{array}{l}  \varepsilon(x_{\mathbf{v}})=\delta_{\mathbf{v,0}}, \ \ \forall \ \mathbf{v}\in V;
\\ \\ \Delta(x_\mathbf{v})=\sum\limits_{\substack{\mathbf{v_1+v_2=v}, \\ \mathbf{v_1,v_2}\in V}} x_{\mathbf{v_1}}\otimes x_{\mathbf{v_2}}, \ \ \forall \ \mathbf{v}\in V.
\end{array}
\right. \eqno{(6.1)}$$

\begin{theorem} \label{pathcoalg} \ With counit \ $\varepsilon$ and comultiplication \ $\Delta$ as in $(6.1)$, the complete intersection ring \ $A = A({\bf -1}, a_1, \cdots, a_n)$ forms a bi-Frobenius algebra
\ $(A,  \ (x_{\mathbf{a-1}})^*, \ x_{\mathbf{a-1}}, \ {\rm Id})$.

Moreover, in general this bi-Frobenius algebra is not isomorphic   $($as a bi-Frobenius algebra$)$ to the one in {\rm Corollary \ref{cormainthm}}, with coalgebra structure given by $(3.1)$.
\end{theorem}
\begin{proof} \ It is straightforward to verify that  $(A, \ \Delta, \ \varepsilon)$ is a coalgebra,  the identity \ $1$ is a group-like element, and that $\varepsilon$ is an algebra map.
Put \ $t = x_{\bf a-1 }$. Since \ $A$ is commutative and cocommutative,  \ ${\rm Id}$ is an algebra anti-homomorphism and a coalgebra anti-homomorphism.  To prove  \ $(A,  \ (x_{\mathbf{a-1}})^*, \ x_{\mathbf{a-1}}, \ {\rm Id})$ is a bi-Frobenius algebra,
by Lemma \ref{lemDoi2002},
it remains to check $$\sum (x_{\bf a-1})^*(t_1x_{\bf v})t_2 = x_{\bf v}, \ \ \forall \ {\bf v}\in V.$$
In fact, by  \ $\Delta (t)=\Delta(x_{\bf a-1}) = \sum\limits_{\substack{\mathbf{v_1+v_2=a-1},\\ \mathbf{v_1,v_2}\in V}} x_{\mathbf{v_1}}\otimes x_{\mathbf{v_2}}$,
one has
\begin{align*}
\sum (x_{\bf a-1})^*(t_1x_{\bf v})t_2 = (x_{\bf a-1})^*(x_{\bf a-1-v} \ x_{\bf v}) \ x_{\bf v} = x_{\bf v}, \ \forall \ {\bf v}\in V.
\end{align*}
This proves \ $(A,  \ (x_{\mathbf{a-1}})^*, \ x_{\mathbf{a-1}}, \ {\rm Id})$ is a bi-Frobenius algebra.
For convenience, denote this bi-Frobenius algebra by $B_1$.

When \  $\sqrt{-1}\in k$, to show that \ $B_1$ is not isomorphic to the bi-Frobenius algebra $B_2 = (A, \ (x_{{\bf a-1}})^*, \ x_{{\bf a-1}}, \ S)$ in {\rm Theorem \ref{mainthm}}, we consider the spaces \ $P(B_1)$ and $P(B_2)$,  of the primitive elements of $B_1$ and $B_2$, respectively.

By Lemma \ref{frobeniuscoalg},  \ dim $P(B_2) = (\prod\limits_{1\le i\le n}a_i) -2.$

On the other hand, each \ $x_i$ is a primitive element of $B_1$.
We claim that ${\rm dim} \ P(B_1) =n$. Otherwise, there exists a primitive element
\ $x=\sum\limits_{{\bf v}\in W}c_{\bf v} \ x_{\bf v}$, where $W$ is a non empty subset of \ $V$ such that \ $c_{\bf v}\ne 0$ and \ ${\bf v\ne e_i}$ for \ $ 1\le i\le n$, for all \ ${\bf v}\in W$. From the proof below, one can see that, without loss of generality, one may assume that ${\bf 0}\notin W$. From the right hand side of the equality
$$\sum\limits_{{\bf v}\in W}c_{\bf v} \ \Delta(x_{\bf v}) = \Delta(x)  = 1\otimes x + x\otimes 1 = \sum\limits_{{\bf v}\in W}c_{\bf v} \ (1\otimes x_{\bf v}) + \sum\limits_{{\bf v}\in W}c_{\bf v} \ (x_{\bf v}\otimes 1)$$
and since ${\bf 0}\notin W$, one sees that \ $\Delta(x)$ is a \ $k$-linear combination of $2|W|$ elements in the basis $\mathcal B\otimes \mathcal B$ of $A\otimes A$, with each coefficient nonzero, where $|W|$ is the number of elements in $W$. While by (6.1), each \ $\Delta(x_{\bf v})$ in the left hand side is a \ $k$-linear combination of $t$ elements in the basis $\mathcal B\otimes \mathcal B$, where $t > 2$.
For ${\bf u\ne v}\in V$, there are no the same summands in $\Delta(x_{\bf u})$ and $\Delta(x_{\bf v})$, since $x_{\bf u_1}\otimes x_{\bf u_2} = x_{\bf v_1}\otimes x_{\bf v_2}$ implies  ${\bf u = u_1+u_2=v_1+v_2=v}$. In this way we see that $\Delta(x)$ is a \ $k$-linear combination of $s$ elements in the basis $\mathcal B\otimes \mathcal B$, with each coefficient nonzero, where $s > 2|W|$. A contradiction! This proves dim $P(B_1)= n$.

Thus, one has \ ${\rm dim} \ P(B_2) = (\prod\limits_{1\le i\le n}a_i) -2 > n = {\rm dim} \ P(B_1)$, except the case \ $n = 2 = a_1 = a_2$. So \ $B_1\ncong B_2$ as coalgebras,  except \ $n = 2 = a_1 = a_2$.  This completes the proof. \end{proof}

\begin{remark} \label{twovar2} \ If ${\bf q} \ne {\bf -1}$ in {\rm Theorem \ref{pathcoalg}} $($i.e., the algebra \ $A$ considered is not complete intersection ring$)$,
then the corresponding result is no longer true. For example, when \ $\sqrt{-1}\notin k$,
the exterior algebra with two variables \ $A_1 = k\langle x_1, x_2\rangle/\langle x_1^2, \ x_2^2, \ x_1x_2 + x_2x_1\rangle$ admits no bi-Frobenius algebra structures. See {\rm Proposition \ref{twovar}}.
\end{remark}

\subsection{\bf Coalgebra structure II} In Subsection 6.1, taking \ $(a_1, \ \cdots, \ a_n) = (2, \ \cdots, \ 2)$,  namely
$$A = A({\bf -1}, \ 2, \ \cdots, \ 2) = k\langle x_1, \cdots, x_n\rangle/\langle x_i^{2}, \ x_i x_j- x_j x_i,  \ 1\le i, \ j \le n\rangle.$$
Consider the $k$-linear maps \ $\varepsilon: A\longrightarrow k$ and \ $\Delta: A \longrightarrow A\otimes A$  given by

$$\left\{\begin{array}{l}
\varepsilon(x_\mathbf{v})=\delta_{\mathbf{0,v}}, \ \ \forall \ \mathbf{v}\in V; \\ \\
\Delta(x_\mathbf{v})=\sum\limits_{\substack{\mathbf{v_1, v_2\le  v}\\ \mathbf{v\le  v_1+v_2}}}(-1)^{|\mathbf{v_1+v_2-v}|} \ x_{\mathbf{v_1}}\otimes x_{\mathbf{v_2}}, \ \ \forall \ \mathbf{v}\in V.
\end{array}
\right. \eqno{(6.2)}$$

\begin{theorem} \label{2truncatedpolyalg} \ With counit \ $\varepsilon$ and comultiplication \ $\Delta$ given by $(6.2)$,
\ $(A, \ \phi, \ t = x_1\cdots x_n, \ {\rm Id})$ is a bi-Frobenius algebra, where \ $\phi = \sum\limits_{\mathbf{v}\in V} (x_\mathbf{v})^*$.

Moreover, this bi-Frobenius algebra has no primitive elements, and hence it can not be isomorphic to the bi-Frobenius algebras given in {\rm Corollary \ref{cormainthm}} and in {\rm Theorem \ref{pathcoalg}}.
\end{theorem}
\begin{proof} \ One directly verify that $(A, \ \Delta, \ \varepsilon)$ is a coalgebra:
\begin{align*}
  (\Delta\otimes {\rm Id})\Delta(x_\mathbf{v})
  &=
  \sum_{\substack{\mathbf{v_1, v_2\le  v}\\ \mathbf{v\le  v_1+v_2}}}(-1)^{|\mathbf{v_1+v_2-v}|} \ \Delta(x_{\mathbf{v_1}})\otimes x_{\mathbf{v_2}}\\
  &=
  \sum_{\substack{\mathbf{v_1, v_2, v_3\le  v}\\ \mathbf{v\le  v_1+v_2+v_3}}}(-1)^{|\mathbf{v_1+v_2+v_3-v}|} \ x_{\mathbf{v_1}}\otimes x_{\mathbf{v_2}}\otimes x_{\mathbf{v_3}}\\
  &=
  ({\rm Id}\otimes\Delta)\Delta(x_\mathbf{v}), \ \ \forall \ \mathbf{v}\in V
\end{align*}
and
\begin{align*}
  (\varepsilon\otimes {\rm Id})\Delta(x_\mathbf{v})
  &=
  \sum_{\mathbf{v\le  v_2\le  v} }(-1)^{|\mathbf{v_2-v}|} \ 1\otimes x_{\mathbf{v_2}}= 1\otimes x_\mathbf{v} = x_\mathbf{v} \\
& = x_\mathbf{v}\otimes 1 =
  \sum_{\mathbf{v\le  v_1\le  v} }(-1)^{|\mathbf{v_1-v}|} \ x_{\mathbf{v_1}}\otimes 1 = ({\rm Id}\otimes \varepsilon)\Delta(x_\mathbf{v}), \ \ \forall \ \mathbf{v}\in V.
\end{align*}

Put \ $t = x_1\cdots x_n$. Since \ $A$ is commutative and cocommutative,  \ ${\rm Id}$ is an algebra anti-homomorphism and a coalgebra  anti-homomorphism.
By Lemma \ref{lemDoi2002}, it remains to prove $$\sum \phi(t_1x_{\bf u})t_2 = x_{\bf u}, \ \ \forall \ {\bf u}\in V.$$
In fact, by
\[
  \Delta(t)
  =\Delta(x_1\cdots x_n)=\sum_{\substack{\mathbf{v_1, v_2}\le (1, \cdots, 1)\\ (1, \cdots, 1) \le \mathbf{v_1 + v_2}}}(-1)^{|\mathbf{v_1+v_2-1}|} \ x_{\mathbf{v_1}}\otimes x_{\mathbf{v_2}}
\]
one has
\begin{align*}
\sum \phi(t_1x_{\bf u})t_2 & =
    \sum_{\substack{\mathbf{v_1, v_2}\le  (1, \cdots, 1)\\ (1, \cdots, 1)\le  {\bf v_1+v_2}}}(-1)^{\bf |{v_1}+{v_2}-1|} \ \phi(x_{\mathbf{v_1}} \ x_\mathbf{u}) \ x_{\mathbf{v_2}}, \ \forall \ \mathbf{u}\in V.
  \end{align*}
If \ $v_{1i} > 1-u_i $ for some $i$ (here \ $v_{1i}$ means the \ $i$th component of \ ${\bf v_1}$), then \ $v_{1i} + u_i \ge 2$, and then \ $x_\mathbf{v_1} \ x_\mathbf{u} = x_{\mathbf{v_1+u}} =0$. Thus,  we only need to consider those \ $\mathbf{v_1}$ with
$\mathbf{v_1}\le \mathbf{1-u}$, and hence \ ${\bf v_2\ge 1-v_1\ge u}$. Fixing ${\bf v_2}$, then the coefficient of \ $x_{\mathbf{v_2}}$ in the sum above is (note that \ $\phi = \sum\limits_{\mathbf{v}\in V} (x_\mathbf{v})^*$)
\begin{align*}
  \sum_{\mathbf{1-v_2 \le  v_1\le  1-u}}(-1)^{|{\bf v_1+v_2 -1}|} \ \phi(x_\mathbf{v_1 + u})
  &=  \sum_{\mathbf{1-v_2 \le  v_1\le  1-u}}(-1)^{\bf |{v_1}+{v_2}-1|}\\
  &=\sum_{\mathbf{0\le  v_1+v_2-1\le  v_2-u}}(-1)^{\bf |{v_1}+{v_2}-1|}.
\end{align*}

 If $\mathbf{v_2 = u}$, then the vector \ ${\bf v_1}$ in the sum is unique, so the coefficient of \ $x_{\mathbf{v_2}}$ in the sum  is
$$\sum\limits_{\mathbf{0\le  v_1+v_2-1\le  v_2-u}}(-1)^{|{\bf v_1+v_2}-1|}= (-1)^{|{\bf 0}|} =1.$$

Assume that \ ${\bf v_2}\ne \mathbf{u}$. Set \ $m = |{\bf v_2-u}|$. Note that \  ${\bf v_2-u}$ is
a vector with \ $m$ components being $1$ and \ $n-m$ components being $0$. We count the number of vectors
\ ${\bf v_1}$ in the sum via $|{\bf v_1+v_2-1}|$ as follows.

When \ $|{\bf v_1+v_2-1}|=0$, then the choice of ${\bf v_1}$ is unique,
i.e., ${\bf v_1 = 1-v_2}$. That is, the number \ of ${\bf v_1}$ is $\binom{m}{0}=1$.

When $|{\bf v_1+v_2-1}|=1$, \ since \ $\mathbf{v_1+v_2-1\le  v_2-u}$ and ${\bf v_2-u}$ has
\ $m$ components being $1$, it follows that the number of the choices of ${\bf v_1}$ such that $|{\bf v_1+v_2-1}|=1$ is just \ $\binom{m}{1}$;

In general, when \ $|{\bf v_1+v_2-1}|=i$ with \ $1\le i\le n$, then the number of the choices of ${\bf v_1}$ such that $|{\bf v_1+v_2-1}|=i$ is just \ $\binom{m}{i}$.

All together, if \ ${\bf v_2}\ne \mathbf{u}$, then we see that the coefficient of \ $x_{\mathbf{v_2}}$ in the sum above is
\[
\sum_{\mathbf{0\le  v_1+v_2-1\le  v_2-u}}(-1)^{\bf |{v_1}+{v_2}-1|}=\sum_{i=0}^{m} \binom{m}{i}(-1)^i=0.
\]

In this way we see that
\begin{align*}\sum \phi(t_1x_{\bf u})t_2 =
    \sum_{\substack{\mathbf{v_1, v_2}\le  (1, \cdots, 1)\\ (1, \cdots, 1)\le  {\bf v_1+v_2}}}(-1)^{\bf |{v_1}+{v_2}-1|} \ \phi(x_{\mathbf{v_1}} \ x_\mathbf{u}) \ x_{\mathbf{v_2}}  = x_{\bf u}, \ \ \forall \ {\bf u}\in V.\end{align*}
This proves that \ $(A, \ \phi, \ x_\mathbf{1} = x_1\cdots x_n, \ {\rm Id})$ is a bi-Frobenius algebra.

Using the similar idea as in the proof of Theorem \ref{pathcoalg}, one claim that
this bi-Frobenius algebra has no primitive elements. For completeness, we include a proof.

In fact, assume otherwise that \ $x=\sum\limits_{{\bf v}\in W}c_{\bf v} \ x_{\bf v}$ is a primitive element, where $W$ is a non empty subset of \ $V$ such that \ $c_{\bf v}\ne 0$ for all \ ${\bf v}\in W$.

First, assume that \ ${\bf 0}\notin W$. From the right hand side of the equality
$$\Delta(x) = 1\otimes x + x\otimes 1 = \sum\limits_{{\bf v}\in W}c_{\bf v} \ (1\otimes x_{\bf v}) + \sum\limits_{{\bf v}\in W}c_{\bf v} \ (x_{\bf v}\otimes 1)$$ and since ${\bf 0}\notin W$, one sees that \ $\Delta(x)$ is a \ $k$-linear combination of $2|W|$ elements in the basis $\mathcal B\otimes \mathcal B$ of $A\otimes A$, with each coefficient nonzero. On the other hand,
$$\Delta(x) = \sum\limits_{{\bf v}\in W}c_{\bf v} \ \Delta(x_{\bf v})$$ and by (6.2) one has
$$\Delta(x_\mathbf{v})=\sum\limits_{\substack{\mathbf{v_1, v_2\le  v}\\ \mathbf{v\le  v_1+v_2}}}(-1)^{|\mathbf{v_1+v_2-v}|} \ x_{\mathbf{v_1}}\otimes x_{\mathbf{v_2}}.$$
Since ${\bf 0}\notin W$ and \ $\Delta(x_{\bf v})$ at least includes the terms:
$$x_{\bf v}\otimes x_{\bf v}, \ \ x_{\bf v -e_i}\otimes x_{\bf v}, \ \ x_{\bf v}\otimes x_{\bf v-e_i}$$ for some \ ${\bf e_i}$, it follows that \ $\Delta(x_{\bf v})$ is a \ $k$-linear combination of $t$ elements in the basis $\mathcal B\otimes \mathcal B$, where $t \ge 3$.

For ${\bf u\ne v}\in V$, there are no the same terms in $\Delta(x_{\bf u})$ and $\Delta(x_{\bf v})$. Actually, if $x_{\bf u_1}\otimes x_{\bf u_2} = x_{\bf v_1}\otimes x_{\bf v_2}$, then
$${\bf u_1} = {\bf v_1}, \ \ {\bf u_2} = {\bf v_2}, \ \ {\bf u_1,u_2\le u\le u_1+u_2}, \ \ {\bf v_1, v_2\le v\le v_1+v_2}.$$ Since $a_i=2$ for $1\le i\le n$, one has
\[
u_i=\begin{cases}
    0 &\text{if}\ \  {u_1}_i={u_2}_i=0;\\
    1 &\text{if \ \ otherwise}
\end{cases}
\]
and
\[
v_i=\begin{cases}
    0 &\text{if}\ \  {v_1}_i={v_2}_i=0;\\
    1 &\text{if \ \ otherwise}. \end{cases}
\]
It follows that \ ${\bf u=v}.$  In this way we see that $\Delta(x)$ is a \ $k$-linear combination of $s$ elements in the basis $\mathcal B\otimes \mathcal B$, with each coefficient nonzero, where $s \ge 3|W|$. A contradiction!

Next, assume that \ ${\bf 0}\in W$. Since \ $1$ is not a primitive element, \ $|W|\ge 2$. By the similar argument as above, one sees that
$$\sum\limits_{{\bf v}\in W}c_{\bf v} \ \Delta(x_{\bf v}) = \Delta(x) = \sum\limits_{{\bf v}\in W}c_{\bf v} \ (1\otimes x_{\bf v}) + \sum\limits_{{\bf v}\in W}c_{\bf v} \ (x_{\bf v}\otimes 1).$$
The left hand side is a \ $k$-linear combination of $s$ elements in the basis $\mathcal B\otimes \mathcal B$, with each coefficient nonzero, where $s \ge 3(|W|-1) + 1 = 3|W| -2$; while
the right hand side is a \ $k$-linear combination of $2|W| -1$ elements in the basis $\mathcal B\otimes \mathcal B$, with each coefficient nonzero. A contradiction!

This completes the proof. \end{proof}

\begin{remark} \label{2truncatedpolyalg2} \ If $(a_1, \cdots, a_n) \ne (2, \cdots, 2)$ in {\rm Theorem \ref{2truncatedpolyalg}},
then the corresponding result is no longer true.

For example, let \ $A = k\langle x_1, x_2\rangle/\langle x_1^2, \ x_2^3, \ x_1x_2-x_2x_1\rangle$. Then \ $A$ becomes a coalgebra with comultiplication $\Delta$ and counit $\varepsilon$ as given by $(6.2)$,
but \ $(A, \ \phi= \sum\limits_{\mathbf{v}\in V} (x_\mathbf{v})^*, \ t = x_1  x_2^2, \ S)$ is not a bi-Frobenius algebra, where $S: A \longrightarrow  A$ is the $k$-linear map
given by $S(a)= \sum\phi(t_{1}a)t_{2}.$

In fact, in this case \ $V= \{(0,0)$, $(0,1)$, $(0,2)$, $(1,0)$, $(1,1)$, $(1,2)\}$,

\begin{align*}
    \Delta(t)&=\Delta(x_1x_2^2)= \sum_{{\substack {{\bf v_1,v_2}\le (1,2), \\ (1,2)\le {\bf v_1+v_2}}}}(-1)^{|{\bf v_1+v_2}-(1,2)|} \ x_{\bf v_1}\otimes x_{\bf v_2},
\end{align*}
and
\begin{align*}
    S(1)&=\sum_{{\substack {{\bf v_1,v_2}\le (1,2), \\ (1,2)\le {\bf v_1+v_2}}}}(-1)^{|{\bf v_1+v_2}-(1,2)|}\phi(x_{\bf v_1}) x_{\bf v_2}= 1+x_2^2.
\end{align*}
So \ $S$ is not an algebra homomorphism, and hence \ $A$ is not a bi-Frobenius algebra.
\end{remark}

{\bf Acknowledgements.} We thank Libin Li and Yanhua Wang for valuable suggestions concerning the
presentation of the results. We thank the referees for their helpful comments and suggestions.

\bibliographystyle{alpha}

\end{document}